\newtheorem{theorem}{Theorem}
\newtheorem{proposition}{Proposition}
\newtheorem{corollary}{Corollary}
\newtheorem{lemma}{Lemma}
\newtheorem{remark}{Remark}
\newtheorem{definition}{Definition}
\newtheorem{assumption}{Assumption}
\DeclareMathOperator{\curl}{curl}
\DeclareMathOperator{\divv}{div}
\newcommand{\RT}{\mathcal{RT}}
\newcommand{\Nc}{\mathcal{N}}
\title{Well-Posedness and Approximation of Weak Solutions to Time Dependent Maxwell's Equations with $L^2$-Data
\thanks{Dedication: This work is dedicated to the memory of our teacher and mentor Prof. Dr. Ronald H.W. Hoppe.}
\thanks{This work is partially supported by the Office of Naval Research (ONR) under Award NO: N00014-24-
1-2147 and the Air Force Office of Scientific Research (AFOSR) under Award NO: FA9550-25-1-0231.}}
\author{Harbir Antil \\ Center for Mathematics and Artificial Intelligence and Department of Mathematical Sciences \\ George Mason University, Fairfax, VA 22030}
\date{}
\begin{document}
\maketitle

\begin{abstract}
We study Maxwell's equations in conducting media with perfectly conducting boundary conditions on Lipschitz domains, allowing rough material coefficients and $L^2$-data. Our first contribution is a direct proof of well-posedness of the first-order weak formulation, including solution existence and uniqueness, an energy identity, and continuous dependence on the data. The argument uses interior-in-time mollification to show uniqueness while avoiding reflection techniques. Existence is via the well-known Galerkin method (cf.~Duvaut and Lions \cite[Eqns.~(4.31)--(4.32), p.~346; Thm.~4.1]{GDuvaut_JLLions_1976a}). For completeness, and to make the paper self-contained, a complete proof has been provided. 

Our second contribution is a structure-preserving semi-discrete finite element method based on the Nédélec/Raviart--Thomas de Rham complex. The scheme preserves a discrete Gauss law for all times and satisfies a continuous-in-time energy identity with stability for nonnegative conductivity. With a divergence-free initialization of the magnetic field (via potential reconstruction or constrained $L^2$ projection), we prove convergence of the semi-discrete solutions to the unique weak solution as the mesh is refined. The analysis mostly relies on projector consistency, weak-* compactness in time-bounded $L^2$ spaces, and identification of time derivatives in dual spaces.
\end{abstract}

\section{Introduction}

\paragraph{Problem setting.}
Let $\Omega\subset\mathbb R^3$ be a bounded Lipschitz domain and $T>0$. Material parameters satisfy
\[
\varepsilon\in L^\infty(\Omega;\mathbb{R}^{3\times3}),\quad
\mu\in L^\infty(\Omega;\mathbb{R}^{3\times3}),\quad
\sigma\in L^\infty(\Omega;\mathbb{R}^{3\times3}),
\]
all symmetric, with $\varepsilon$ and $\mu$ uniformly elliptic and $\sigma$ nonnegative. Given
$f\in L^2(0,T;L^2(\Omega)^3)$ and $E_0,B_0\in L^2(\Omega)^3$ with $\divv B_0=0$ in $L^2(\Omega)$, we consider
\[
\begin{cases}
\varepsilon\,\partial_t E+\sigma E-\curl(\mu^{-1}B)=f,\\[0.5mm]
\partial_t B+\curl E=0,
\end{cases}
\quad
E\times n=0 \ \text{ on }\partial\Omega\times(0,T),\qquad
E(0)=E_0,\ \ B(0)=B_0.
\]
Our weak solution notion (Definition~\ref{def:weak_soln}) requires
\[
E\in H^1\!\big(0,T;H_0(\curl;\Omega)^*\big)\cap C([0,T];L^2(\Omega)^3),\qquad
B\in H^1\!\big(0,T;H(\curl;\Omega)^*\big)\cap C([0,T];L^2(\Omega)^3),
\]
satisfying the usual variational identities against $\psi\in H_0(\curl;\Omega)$ and $\phi\in H(\curl;\Omega)$ for a.e.\ $t\in(0,T)$.

\paragraph{Scope and goals.}
The primary goal of this paper is to prove convergence of a conforming finite element
\emph{semi-discretization in space} (first–order $H(\curl)$/$H(\divv)$ formulation) to the continuous weak solution for data
$f \in L^2(0,T;L^2(\Omega)^3)$ and $E_0,B_0 \in L^2(\Omega)^3$. To the best of our knowledge, this result is new. 
A byproduct of the arguments needed for the convergence analysis is a direct, self-contained proof of well-posedness for the continuous problem with the above stated data regularity. 

\paragraph{Positioning within the literature.}
There are comparatively few references that treat well-posedness for Maxwell's equations at this level of generality. The classical monograph of Duvaut and Lions \cite[Eqns.~(4.31)--(4.32), p.~346; Thm.~4.1]{GDuvaut_JLLions_1976a} establishes existence and uniqueness of $L^\infty(0,T;L^2(\Omega)^3)$ solutions. Their concise presentation makes it nontrivial to infer continuity in time, continuous dependence on data, and additional time-regularity. 
Notice that such results can also be inferred from \cite[Thm.~2.4]{AKirsch_ARieder_2016a} and \cite[Lemma~3.2]{IYousept_2020a} via the semigroup/mild-solution framework. The latter result uses the abstract theory of $C_0$-semigroups (cf.\ Ball \cite{JMBall_1977a}). We also refer to \cite[Sec.~8.2]{RLeis_1997a} for a discussion on well-posedness using the spectral theorem. See also \cite[Sec.~7.8]{MFabrizio_MMorro_2003a} where the authors consider weak solutions satisfying the free charge density law and Gauss law in the weak sense. Their well-posedness result uses density based arguments. 

The present paper gives a complete self-contained proof of well-posedness for the weak formulation of Maxwell's equations. 
Our approach may be viewed as an extension and clarification of the arguments in \cite{GDuvaut_JLLions_1976a}: (i) we provide complete details in $L^2$-data setting; (ii) uniqueness is obtained without resorting to time-reflection, using instead an interior-in-time mollification argument; and (iii) we establish the stated time-regularity and continuous dependence without auxiliary smoothing assumptions on the data. We collect these results here both for completeness and because several steps in the continuous analysis feed directly into the convergence proof of our numerical scheme. This synthesis honors Ronald H.W. Hoppe, who made influential contributions to computational electromagnetics \cite{RHWHoppe_1982a}.

On the numerical side, early finite element discretizations for Maxwell's equations were developed in the semi-discrete (space-only) setting in \cite{PMonk_1992a} and in fully discrete form in \cite{PJCiarlet_JZou_1999a}, typically for second–order (in space/time) formulations. See also \cite{CGMakridakis_PMonk_1995} for analysis and convergence estimates of a scheme closely related to ours (N\'ed\'elec and Raviart-Thomas (RT) discretization for $E$ and $B$); \cite{JLi_2011a} for a fully time-discrete leapfrog analysis; and \cite{JLi_2007a} for a piecewise-constant (in space) approximation of the electric field.

Recent work has addressed nonlinear and nonsmooth models. For Maxwell variational inequalities (MVIs) of the second kind in type-II superconductivity, \cite{MWinckler_IYousept_2019a} uses N\'ed\'elec elements for \(E\), piecewise constants for \(B\), and implicit Euler in time. For MVIs of the first kind in electric shielding, \cite{MHensel_IYousept_2022a} employs piecewise constants for \(E\) and N\'ed\'elec for \(H\) (with \(B=\mu H\)). Both assume sources in \(W^{1,\infty}(0,T;L^2)\). The quasi-MVI study \cite{MHensel_MWinckler_IYousept_2024a} treats both implicit Euler and leapfrog: N\'ed\'elec for \(E\) and piecewise constants for \(H\) in the former; piecewise constants for \(E\) and N\'ed\'elec for \(H\) in the latter. Remarkably for leapfrog scheme, the authors can handle sources which are of bounded variation type. 

To our knowledge, none of these works consider the specific conforming pairing analyzed here---N\'ed\'elec/RT in the first-order \(H(\mathrm{curl})/H(\mathrm{div})\) framework---together with \emph{minimal data regularity} \(f\in L^2(0,T;L^2)\), \(E_0,B_0\in L^2\), and prove convergence of the \emph{semi-discrete} solution to the weak continuous solution.

\paragraph{Contributions.}
\begin{itemize}
  \item A direct, proof of existence, uniqueness, and continuous dependence for the weak Maxwell system in the minimal-regularity class
  \[
  E\in H^1(0,T;H_0(\mathrm{curl};\Omega)^*) \cap C([0,T];L^2),\qquad
  B\in H^1(0,T;H(\mathrm{curl};\Omega)^*) \cap C([0,T]; L^2).
  \]
  \item A structure-preserving \emph{semi-discrete} FE formulation based on the N\'ed\'elec/RT de~Rham complex that (i) preserves a discrete Gauss law \emph{for all times} and (ii) satisfies a continuous-in-time discrete energy identity and stability for $\sigma\ge0$.
  \item Convergence of the semi-discrete solutions $(E_h,B_h)$ to the continuous unique weak solution as $h\to0$ under minimal data assumptions
  ($f\in L^2(0,T;L^2(\Omega)^3)$, $E_0,B_0\in L^2(\Omega)^3$), using mainly spatial projector consistency and weak-* compactness in $L^\infty(0,T;L^2)$.
\end{itemize}

\paragraph{Organization.}
Section~\ref{s:not} states the functional setting and weak formulation. Section~\ref{s:wellposed} proves uniqueness via time mollification and test-side smoothing and existence using the Galerkin method. Section~\ref{sec:semidiscrete} introduces the N\'ed\'elec/RT semi-discrete scheme, proves stability and discrete Gauss law preservation, and establishes convergence. Auxiliary results are collected in Appendix~\ref{app:appendix}.

\section{Notation and Preliminaries}\label{s:not}
Let \( \Omega \subset \mathbb{R}^3 \) be a bounded Lipschitz domain and \( 0 < T < +\infty \). Consider the time-dependent Maxwell system with anisotropic tensor coefficients:
\begin{subequations}\label{eq:Maxwell_strong}
\begin{align}
\varepsilon(x) \partial_t E - \mathrm{curl} \, (\mu^{-1}(x) B) + \sigma(x) E &= f(x,t), 
&& \mbox{in } \Omega \times (0,T) \label{eq:max1} \\
\partial_t B + \mathrm{curl} \, E &= 0, 
&& \mbox{in } \Omega \times (0,T) 
\label{eq:max2} 
\end{align}
\end{subequations}
subject to the initial and boundary conditions:
\begin{align*}
E(0,x) &= E_0(x), \quad B(0,x) = B_0(x), && \text{in } \Omega \\
E \times \nu &= 0  && \text{on } \partial \Omega \times (0,T)  
\end{align*}
Notice that \eqref{eq:max1} and \eqref{eq:max2} are the Amp\'ere-Maxwell
and Faraday laws, respectively. 

\begin{assumption}
\label{ass:data}
Throughout the article, the following conditions are assumed on the data:
\begin{itemize}
  \item \( \varepsilon(x),\sigma(x), \mu(x) \in L^\infty(\Omega; \mathbb{R}^{3 \times 3}) \) are symmetric tensor fields;
  \item There exist constants \( \varepsilon_0, \mu_0 \) such that
  \[
    \xi^\top \varepsilon(x) \xi \ge \varepsilon_0 |\xi|^2,\quad \xi^\top \mu(x) \xi \ge \mu_0 |\xi|^2,    
    \quad \text{for all } \xi \in \mathbb{R}^3,
    \ \text{a.e. } x \in \Omega;
  \]
  \item \(\sigma\) is symmetric positive semi-definite, \( f \in L^2(0,T; L^2(\Omega)^3) \), and \( E_0, B_0 \in L^2(\Omega)^3 \) with $\mathrm{div}\, B_0 = 0$ in $L^2(\Omega)$. 
\end{itemize}
\end{assumption}

Throughout, we use $\|\cdot\|_{L^2(\Omega)}$ to denote the $L^2$-norm and $(\cdot,\cdot)$ to denote the $L^2$-scalar product. We denote the space for vector valued functions by $L^2(\Omega)^3$, but will also interchangeably use $L^2$ (to minimize the notation) when it is clear from the context. 
For a given Banach space $X$, we denote its topological dual by $X^*$, moreover, $\langle\cdot,\cdot\rangle_X$ denotes the duality pairing between $X^*$ and $X$. 
We define the following Sobolev spaces 
\[
\begin{aligned}
H(\text{div}; \Omega) &:= 
\left\{ v \in L^2(\Omega)^3 : \mbox{div } v \in L^2(\Omega) \right\}, \\
H_0(\text{div}; \Omega) &:= 
\left\{ v \in H(\text{div}; \Omega) : \gamma_\nu(v) := \gamma v \cdot \nu = 0 \mbox{ on } \partial\Omega \right\}, \\
H(\text{div}^0; \Omega) 
&:= \left\{ v \in H(\text{div}; \Omega) : \mbox{div } v = 0 \text{ a.e. in }  \Omega \right\} , \\
H(\text{curl}; \Omega) 
&:= \left\{ v \in L^2(\Omega)^3 : \mathrm{curl}\, v \in L^2(\Omega)^3 \right\} ,
\\ 
H_0(\text{curl}; \Omega) &:= \left\{ v \in H(\text{curl}; \Omega) : \gamma_\tau(v) :=  \gamma v \times \nu = 0 \text{ on } \partial \Omega \right\} .
\end{aligned}
\]
Here $\gamma_\nu$ is the normal trace \cite{VGirault_PARaviart_1986a} and $\gamma_\tau$ the tangential trace \cite{ABuffa_PJCiarlet_2001a,MON03}.

\begin{definition}[Weak solution to Maxwell's Equations]
\label{def:weak_soln}
The electric and magnetic fields $(E,B)$ solve \eqref{eq:Maxwell_strong} weakly if and only if 
\[
\begin{aligned}
E &\in H^1(0,T; H_0(\mathrm{curl};\Omega)^*) \cap L^\infty(0,T;L^2(\Omega)^3), \\
B &\in H^1(0,T; H(\curl;\Omega)^*) \cap L^\infty(0,T;L^2(\Omega)^3) ,
\end{aligned}
\]
and satisfy, for almost every \( t \in (0,T) \), the variational formulation: 
\begin{subequations}\label{eq:MaxWeak}
\begin{align}
\langle \varepsilon \partial_t E, \psi \rangle_{H_0(\mathrm{curl}; \Omega)} + (\sigma E, \psi) - (\mu^{-1}B, \mathrm{curl}\, \psi) &= (f, \psi), 
&&\forall \psi \in H_0(\mathrm{curl}; \Omega), \\
\langle \partial_t B, \phi \rangle_{H(\curl; \Omega)} + ( E, \curl \phi ) &= 0, 
&&\forall \phi \in H(\curl; \Omega),
\end{align}
\end{subequations}
with initial data \( E(0) = E_0 \in L^2(\Omega)^3 \), \( B(0) = B_0 \in L^2(\Omega)^3 \). Moreover, if $\divv B_0 = 0$ in $L^2(\Omega)$, then additionally, $B \in L^\infty(0,T;H(\divv^0;\Omega))$.
\end{definition}
For the justification of pointwise evaluation of the initial condition in time under the regularity given in Definition~\ref{def:weak_soln}, we refer to \cite[Proposition 2.19]{AKaltenbach_2023a}. Notice that in Corollary~\ref{cor:C0-in-time} we establish that $(E,B)$ solving \eqref{eq:MaxWeak} also fulfills $E \in C([0,T];L^2(\Omega)^3)$ and $B \in C([0,T];L^2(\Omega)^3)$.

\section{Well Posedness of Maxwell's Equations}
\label{s:wellposed}

This section is organized as follows. First in Theorem~\ref{thm:uniq-cont} we establish that \eqref{eq:MaxWeak} has a unique solution. This is tricky because we cannot use $E$ and $B$ as test functions to use the standard energy argument to establish uniqueness. Instead we develop a mollification in time argument. Corollary~\ref{cor:C0-in-time} shows that $E$ and $B$ are in $C([0,T];L^2(\Omega)^3)$. Next, in Theorem~\ref{thm:main} we show existence of solution to \eqref{thm:uniq-cont} via a Galerkin type argument. Proposition~\ref{prop:divB} shows that $B$ is solenoidal, i.e., $\divv B = 0$ in a certain sense.

The following result will be helpful in showing uniqueness of solution:
\begin{lemma}[Characterization of $H(\curl)$ via distributional curl]
\label{lem:Hcurl-char}
Let $v\in L^2(\Omega)^3$. If there exists $g\in L^2(\Omega)^3$ such that
\[
(v,\ \curl \phi)_{L^2(\Omega)^3} \;=\; (g,\ \phi)_{L^2(\Omega)^3}\qquad
\forall\,\phi\in C_c^\infty(\Omega)^3,
\]
then $v\in H(\curl;\Omega)$ and $\curl v=g$ in $L^2(\Omega)^3$.
\end{lemma}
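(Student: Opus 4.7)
The plan is to unpack the definition of $H(\curl;\Omega)$ and recognize the lemma's hypothesis as precisely the statement that the distributional curl of $v$ is represented by the $L^2$-function $g$.

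I would begin by recalling the definition: $H(\curl;\Omega)$ consists of those $v\in L^2(\Omega)^3$ whose distributional curl lies in $L^2(\Omega)^3$. For $v\in L^2(\Omega)^3$, the distributional curl is the vector-valued distribution defined by $\phi\mapsto (v,\curl\phi)_{L^2(\Omega)}$ for $\phi\in C_c^\infty(\Omega)^3$. This definition is motivated by the integration-by-parts identity $\int_\Omega (\curl u)\cdot\phi\,dx = \int_\Omega u\cdot\curl\phi\,dx + \int_{\partial\Omega}(u\times\nu)\cdot\phi\,dS$, which is obtained from the pointwise identity $\nabla\cdot(u\times\phi) = \phi\cdot\curl u - u\cdot\curl\phi$ together with the divergence theorem; for $\phi\in C_c^\infty(\Omega)^3$ the boundary integral vanishes. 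Note that, unlike the gradient/divergence pairing, there is no sign change here, which is why the hypothesis can be written without a minus sign.

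Under this identification the hypothesis reads $\langle\curl v,\phi\rangle = (g,\phi)_{L^2(\Omega)}$ for every $\phi\in C_c^\infty(\Omega)^3$, i.e., the distribution $\curl v$ coincides with the distribution induced by $g\in L^2(\Omega)^3$. Since the canonical embedding $L^2(\Omega)^3\hookrightarrow\mathcal{D}'(\Omega)^3$ is injective (by density of $C_c^\infty(\Omega)^3$ in $L^2(\Omega)^3$ and the du Bois--Reymond lemma), this forces $\curl v = g$ as elements of $L^2(\Omega)^3$. Consequently $v\in H(\curl;\Omega)$ with $\curl v = g$, as required.

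No substantial obstacle arises; the statement is essentially a matter of definition-unpacking. The only points that must be handled carefully are the sign convention in the integration-by-parts formula for curl (curl is formally self-adjoint on $C_c^\infty$, in contrast with gradient and divergence) and the injectivity of the $L^2$-into-distributions embedding, both of which are standard. I would include this lemma as a brief remark/citation and defer the one-line verification to the reader.
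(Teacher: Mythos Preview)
Your proposal is correct and follows essentially the same approach as the paper: both recognize the hypothesis as the statement that the distributional curl of $v$ is represented by the $L^2$-function $g$, and conclude by identifying $\curl v=g$ in $L^2$. The paper phrases the identification via the Riesz representation theorem (extending the functional $\Lambda(\phi)=(v,\curl\phi)$ to $L^2$ by density and noting that its Riesz representative is $g$), while you invoke the injectivity of $L^2(\Omega)^3\hookrightarrow\mathcal D'(\Omega)^3$; these are equivalent formulations of the same argument.
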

\begin{proof}
Define a linear functional $\Lambda$ on $C_c^\infty(\Omega)^3$ by
$\Lambda(\phi):=(v,\curl\phi)_{L^2}$. The hypothesis yields
$\Lambda(\phi)=(g,\phi)_{L^2}$, hence
$|\Lambda(\phi)|\le \|g\|_{L^2}\,\|\phi\|_{L^2}$ for all $\phi\in C_c^\infty$.
Thus $\Lambda$ extends continuously (by density) to $L^2(\Omega)^3$ and the
Riesz representative of $\Lambda$ is $g$. By the definition of the
distributional curl, this precisely means that $\curl v=g$ as an $L^2$–field;
hence $v\in H(\curl;\Omega)$ with $\curl v=g$ in $L^2(\Omega)^3$.
\end{proof}

\begin{theorem}[Solution to \eqref{eq:MaxWeak} is unique]\label{thm:uniq-cont}
Let $(E_i,B_i)$, $i=1,2$, be weak solutions to \eqref{eq:MaxWeak} with the same data 
$f,E_0,B_0$ in the sense of Definition~\ref{def:weak_soln}. Then $E_1\equiv E_2$ and $B_1\equiv B_2$ on $(0,T)$.
\end{theorem}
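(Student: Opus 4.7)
By linearity, set $u := E_1 - E_2$ and $v := B_1 - B_2$; these satisfy the homogeneous weak Maxwell system of Definition~\ref{def:weak_soln} with $f=0$ and $u(0)=v(0)=0$. An energy identity of the form $\tfrac12 \partial_t[(\varepsilon u,u)+(\mu^{-1}v,v)]+(\sigma u,u)=0$ would close the argument immediately, but the natural choice $\psi=u$ and $\phi=\mu^{-1}v$ is illegal because $u,v$ only lie in $L^\infty(0,T;L^2)$ in space, hence are not admissible test functions in $H_0(\curl;\Omega)$ or $H(\curl;\Omega)$. My plan is to introduce a time mollifier that transfers the missing spatial regularity onto the solutions themselves via Lemma~\ref{lem:Hcurl-char}.

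Fix a causal mollifier $\rho_\eta \in C_c^\infty(\mathbb{R})$ with $\supp \rho_\eta \subset [0,\eta]$ and $\int \rho_\eta = 1$, and extend $u,v$ by zero to $t \le 0$. Since $u,v \in H^1(0,T;X^*)$ with vanishing initial trace in the corresponding dual space, the zero extension lies in $H^1(-\infty,T;X^*)$, so $u_\eta := \rho_\eta * u$ and $v_\eta := \rho_\eta * v$ are smooth in time with values in $L^2(\Omega)^3$ and satisfy $u_\eta(0) = v_\eta(0) = 0$ by causality. Convolving the variational identities in time produces, for every $t \in (0,T)$ and every $\psi \in H_0(\curl;\Omega)$, $\phi \in H(\curl;\Omega)$,
\begin{align*}
(\varepsilon\, \partial_t u_\eta, \psi) + (\sigma u_\eta, \psi) - (\mu^{-1} v_\eta, \curl \psi) &= 0, \\
(\partial_t v_\eta, \phi) + (u_\eta, \curl \phi) &= 0,
\end{align*}
with $\varepsilon \partial_t u_\eta, \partial_t v_\eta \in L^\infty(0,T;L^2(\Omega)^3)$, obtained by differentiating the convolution directly and using $u,v \in L^\infty(0,T;L^2)$.

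Next, I apply Lemma~\ref{lem:Hcurl-char} to upgrade spatial regularity pointwise in time. Taking $\psi \in C_c^\infty(\Omega)^3$ in the first identity expresses $\psi \mapsto (\mu^{-1} v_\eta, \curl \psi)$ as an $L^2$-bounded functional with Riesz representative $\varepsilon \partial_t u_\eta + \sigma u_\eta$, so $\mu^{-1} v_\eta \in H(\curl;\Omega)$ with that curl. Taking $\phi \in C_c^\infty(\Omega)^3$ in the second identity gives $u_\eta \in H(\curl;\Omega)$ with $\curl u_\eta = -\partial_t v_\eta$; testing then with the full $\phi \in H(\curl;\Omega)$ and integrating by parts forces the tangential trace $u_\eta \times \nu$ to vanish on $\partial\Omega$, so $u_\eta \in H_0(\curl;\Omega)$. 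With this upgrade the tests $\psi = u_\eta$ and $\phi = \mu^{-1} v_\eta$ are now legitimate. Symmetry of $\varepsilon,\mu^{-1}$ converts the time-derivative terms into $\tfrac12 \partial_t$ of quadratic forms, and integration by parts cancels the cross-terms $(\mu^{-1} v_\eta, \curl u_\eta)$ and $(u_\eta, \curl(\mu^{-1} v_\eta))$ (no boundary contribution, since $u_\eta \in H_0(\curl)$), producing
\begin{equation*}
\tfrac12 \partial_t \bigl[(\varepsilon u_\eta, u_\eta) + (\mu^{-1} v_\eta, v_\eta)\bigr] + (\sigma u_\eta, u_\eta) = 0.
\end{equation*}
Integrating on $(0,t)$ and using $u_\eta(0) = v_\eta(0) = 0$ together with $\sigma \ge 0$ yields $(\varepsilon u_\eta, u_\eta)(t) + (\mu^{-1} v_\eta, v_\eta)(t) \le 0$, which by uniform ellipticity of $\varepsilon$ and $\mu$ forces $u_\eta(t) = v_\eta(t) = 0$ for every $t \in (0,T)$. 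Passing $\eta \to 0$ so that $u_\eta \to u$ and $v_\eta \to v$ in $L^2(0,T;L^2(\Omega)^3)$ concludes $u \equiv v \equiv 0$.

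The hard part, and the reason uniqueness is delicate in this minimal-regularity class, is precisely that neither $u$ nor $\mu^{-1} v$ is \emph{a priori} admissible as a test function, so no direct energy argument is available; causal time mollification combined with Lemma~\ref{lem:Hcurl-char} is engineered both to upgrade $u_\eta$ and $\mu^{-1} v_\eta$ into $H_0(\curl)$ and $H(\curl)$ respectively, and to simultaneously realize the vanishing initial condition at the mollified level, so that integrating the energy identity down to $t=0$ introduces no boundary-in-time artifact and no time-reflection of the solution is required.
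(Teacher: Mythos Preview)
Your argument is correct and shares the paper's central device---time mollification followed by Lemma~\ref{lem:Hcurl-char} to lift $u_\eta$ into $H_0(\curl)$ and $\mu^{-1}v_\eta$ into $H(\curl)$---but the two proofs diverge in how they reach back to $t=0$.

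The paper mollifies with an \emph{even} kernel on the interior window $(\delta,T-\delta)$, then introduces a compactly supported time cutoff $\eta\in C_c^\infty(0,T)$ to obtain the energy identity in distributional form; letting $\delta\downarrow0$ yields $\mathcal E\in W^{1,1}(0,T)$ with $\mathcal E'=-(\sigma\tilde E,\tilde E)$, and the initial condition enters only at the very end through the absolutely continuous representative of $\mathcal E$. Your route instead uses a \emph{causal} kernel and the zero extension of $(u,v)$ to $t\le0$: because $u(0)=v(0)=0$ in $L^2\hookrightarrow X^*$, the zero extension stays in $H^1(-\infty,T;X^*)$ and satisfies the homogeneous system a.e.\ on $(-\infty,T)$, so the mollified pair inherits $u_\eta(0)=v_\eta(0)=0$ \emph{exactly}. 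This lets you integrate the energy identity from $0$ directly at the mollified level and conclude $u_\eta\equiv v_\eta\equiv0$ before passing $\eta\to0$, bypassing the cutoff, the $W^{1,1}$ identification, and any question about matching $\mathcal E(0^+)$ with the pointwise energy at the initial time. The paper's approach, in turn, never extends the solution outside $(0,T)$ and so avoids verifying that the zero extension preserves $H^1$-in-time regularity. Both are legitimate ways to recover the initial datum without time reflection; yours is arguably more direct for the homogeneous problem, while the paper's interior scheme is what it later reuses (with forcing retained) to prove Corollary~\ref{cor:C0-in-time}.
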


\begin{proof}
Let $(\tilde E,\tilde B):=(E_1-E_2,B_1-B_2)$. Then $(\tilde E,\tilde B)$ solves the homogeneous system
\begin{equation}\label{eq:weak-diff}
\begin{aligned}
\langle \varepsilon \partial_t \tilde E, \psi \rangle_{H_0(\curl;\Omega)} 
+ (\sigma \tilde E, \psi) - (\mu^{-1} \tilde B, \curl \psi) &= 0
&& \forall \psi\in H_0(\curl;\Omega), \\
\langle \partial_t \tilde B, \phi \rangle_{H(\curl;\Omega)} 
+ ( \tilde E, \curl \phi ) &= 0
&& \forall \phi\in H(\curl;\Omega),
\end{aligned}
\end{equation}
with $\tilde E(0)=0$ and $\tilde B(0)=0$ in $L^2(\Omega)^3$.

\medskip\noindent
{\bf Step 1 (interior time–mollification).}
Let $\rho\in C_c^\infty(\mathbb R)$ be even, nonnegative, with $\int_{\mathbb R}\rho=1$ and
$\operatorname{supp}\rho\subset[-1,1]$. For $\delta\in(0,T/2)$ set
$\rho_\delta(s):=\delta^{-1}\rho(s/\delta)$, so $\operatorname{supp}\rho_\delta\subset[-\delta,\delta]$.

For $t\in(\delta,T-\delta)$ define the (interior) mollifications
\[
\tilde E^\delta(t):=\int_{\mathbb R}\rho_\delta(s)\,\tilde E(t-s)\,ds
=\int_{-\delta}^{\delta}\rho_\delta(s)\,\tilde E(t-s)\,ds,\qquad
\tilde B^\delta(t):=\int_{-\delta}^{\delta}\rho_\delta(s)\,\tilde B(t-s)\,ds.
\]
This is well-defined because, for any $t\in(\delta,T-\delta)$ and any $s$ with
$\rho_\delta(s)\neq0$ (hence $|s|\le\delta$),
\begin{equation}\label{eq:window}
t-s\ \ge\ t-\delta\ >\ 0,\qquad
t-s\ \le\ t+\delta\ <\ T,
\end{equation}
so $t-s\in(0,T)$ and only values of $(\tilde E,\tilde B)$ inside their domain are sampled.
Standard properties of convolution yield
\[
\tilde E^\delta,\tilde B^\delta\in C^\infty\big((\delta,T-\delta);L^2(\Omega)^3\big),\qquad
\partial_t \tilde E^\delta=\int_{-\delta}^{\delta}\rho_\delta'(s)\,\tilde E(t-s)\,ds,\quad
\partial_t \tilde B^\delta=\int_{-\delta}^{\delta}\rho_\delta'(s)\,\tilde B(t-s)\,ds,
\]
so $\partial_t\tilde E^\delta\in C^\infty\big((\delta,T-\delta);L^2(\Omega)^3\big)$ and likewise for
$\partial_t\tilde B^\delta$.

Moreover, for every compact interval $J=[a,b]\Subset(0,T)$ and $\delta<\min\{a,T-b\}$,
\[
\tilde E^\delta\to \tilde E\ \ \text{in }L^2\big(J;L^2(\Omega)^3\big),\qquad
\tilde B^\delta\to \tilde B\ \ \text{in }L^2\big(J;L^2(\Omega)^3\big),
\]
and the mollifications are $L^\infty$–stable:
\begin{equation}\label{eq:Linfstable}
\|\tilde E^\delta\|_{L^\infty((\delta,T-\delta);L^2)}\le \|\tilde E\|_{L^\infty((0,T);L^2)},\qquad
\|\tilde B^\delta\|_{L^\infty((\delta,T-\delta);L^2)}\le \|\tilde B\|_{L^\infty((0,T);L^2)}.
\end{equation}

\medskip\noindent
\textbf{Step 2 (regularized Maxwell system on $(\delta,T-\delta)$).}
Fix $t\in(\delta,T-\delta)$, $\psi\in H_0(\curl;\Omega)$, and $\phi\in H(\curl;\Omega)$. Using
\eqref{eq:weak-diff} at times $t-s$ and Fubini,
\[
\begin{aligned}
\langle \varepsilon \partial_t \tilde E^\delta(t),\psi\rangle_{H_0(\curl;\Omega)}
&=\int\rho_\delta(s)\,\langle \varepsilon \partial_t \tilde E(t-s),\psi\rangle\,ds
=(\mu^{-1}\tilde B^\delta(t),\curl\psi)-(\sigma \tilde E^\delta(t),\psi),\\
\langle \partial_t \tilde B^\delta(t),\phi\rangle_{H(\curl;\Omega)}
&=\int\rho_\delta(s)\,\langle \partial_t \tilde B(t-s),\phi\rangle\,ds
=-( \tilde E^\delta(t),\, \curl \phi ) . 
\end{aligned}
\]
Thus, for a.e.\ $t\in(\delta,T-\delta)$,
\begin{equation}\label{eq:reg-eqs}
\begin{aligned}
\langle \varepsilon \partial_t \tilde E^\delta(t),\psi\rangle_{H_0(\curl;\Omega)} + (\sigma \tilde E^\delta(t),\psi) - (\mu^{-1}\tilde B^\delta(t),\curl\psi) &= 0,\\
\langle \partial_t \tilde B^\delta(t),\phi\rangle_{H(\curl;\Omega)} + (\tilde E^\delta(t), \curl\phi) &= 0.
\end{aligned}
\end{equation}
Because the time mollification yields $\partial_t \tilde B^\delta(t)\in L^2(\Omega)^3$ for a.e.\ $t$, in \eqref{eq:reg-eqs}$_2$ we have 
\[
\big\langle \partial_t \tilde B^\delta(t),\ \phi\big\rangle_{H(\curl;\Omega)} =(\partial_t \tilde B^\delta(t),\ \phi)_{L^2}.
\]
Hence \eqref{eq:reg-eqs}$_2$ reads 
\begin{equation}\label{eq:aux2}
\big(\tilde E^\delta(t),\ \curl\phi\big)_{L^2} \;=\; -\,(\partial_t \tilde B^\delta(t),\ \phi)_{L^2}
\qquad\forall\,\phi\in H(\curl;\Omega).
\end{equation}
In particular, \eqref{eq:aux2} holds for $\phi \in C_c^\infty(\Omega)^3 \subset H(\curl;\Omega)$. Then 
Lemma~\ref{lem:Hcurl-char} with $v = \tilde{E}^\delta(t)$ and $g = -\partial_t \tilde{B}^\delta(t)$ implies 
\begin{equation}\label{eq:baba}
\tilde{E}^\delta(t) \in H(\curl;\Omega), \qquad
\curl \tilde{E}^\delta(t) = -\partial_t \tilde{B}^\delta(t) \mbox{ in } L^2(\Omega)^3.
\end{equation}
Now apply the Green identity for $H(\curl)$ to $\tilde E^\delta(t)\in H(\curl)$:
for any $\phi\in H(\curl;\Omega)$,
\[
\big(\tilde E^\delta(t),\ \curl\phi\big)_{L^2}
= \big(\curl\tilde E^\delta(t),\ \phi\big)_{L^2}
\;-\;\big\langle n\times \tilde E^\delta(t),\ \phi\big\rangle_{\partial\Omega}.
\]
Using \eqref{eq:aux2} and \eqref{eq:baba}, we find
\[
-\big(\partial_t \tilde B^\delta(t),\ \phi\big)_{L^2}
= \big(\curl\tilde E^\delta(t),\ \phi\big)_{L^2}
\;-\;\big\langle n\times \tilde E^\delta(t),\ \phi\big\rangle_{\partial\Omega}
= -\big(\partial_t \tilde B^\delta(t),\ \phi\big)_{L^2}
\;-\;\big\langle n\times \tilde E^\delta(t),\ \phi\big\rangle_{\partial\Omega}.
\]
Canceling the equal $L^2$ terms yields
\[
\big\langle n\times \tilde E^\delta(t),\ \phi\big\rangle_{\partial\Omega}=0
\qquad\forall\,\phi\in H(\curl;\Omega).
\]
Thus the tangential trace vanishes, $n\times \tilde E^\delta(t)=0$ in
the trace sense, i.e., $\tilde E^\delta(t)\in H_0(\curl;\Omega)$ on $t \in (\delta, T-\delta)$. 

Similarly, from \eqref{eq:reg-eqs}$_1$ 
and the fact that $\partial_t\tilde E^\delta(t),\tilde E^\delta(t)\in L^2(\Omega)^3$, we obtain that 
\[
\mu^{-1}\tilde B^\delta(t)\in H(\curl;\Omega)\qquad\text{for a.e.\ }t\in(\delta,T-\delta).
\]

\paragraph{Step 3 (energy identity with a time cutoff and \(\delta\downarrow0\)).} 
Fix a $\eta\in C_c^\infty(0,T)$ and set
\[
K:=\operatorname{supp}\eta
=\overline{\{\,s\in(0,T):\eta(s)\neq 0\,\}}\Subset(0,T).
\]
Since $\eta$ has compact support in $(0,T)$, $K$ is compact and $K\subset(0,T)$.
Let
\[
a:=\inf K,\qquad b:=\sup K,
\]
so $0<a\le b<T$ and $K\subset[a,b]$. Define the distances to the endpoints
\[
d_0:=a>0,\qquad d_T:=T-b>0,
\]
and set
\[
\delta_\eta:=\tfrac12\min\{d_0,d_T\}>0.
\]
Then for any $\delta\in(0,\delta_\eta)$ and any $s\in K$ we have
\[
s\ge a>d_0/2>\delta,
\]
and, since $b=T-d_T$ and $\delta<d_T/2<d_T$,
\[
s\le b=T-d_T<T-\delta.
\]
Hence $s\in(\delta,T-\delta)$, and therefore
\[
K\subset(\delta,T-\delta).
\]

Since \(\tilde E^\delta(\cdot)\in H_0(\curl;\Omega)\) and \(\mu^{-1}\tilde B^\delta(\cdot)\in H(\curl;\Omega)\) a.e.\ on \((\delta,T-\delta)\) (Step~2), we may use the time–dependent test functions
\[
\psi(t):=\eta(t)\,\tilde E^\delta(t)\in H_0(\curl;\Omega),\qquad
\phi(t):=\eta(t)\,\mu^{-1}\tilde B^\delta(t)\in H(\curl;\Omega)
\]
in \eqref{eq:reg-eqs}, integrate in \(t\in(0,T)\), and add the two relations. Because \(\eta=\eta(t)\) has no spatial dependence, the cross terms
\[
-(\mu^{-1}\tilde B^\delta,\curl(\eta\tilde E^\delta))
+(\curl \tilde E^\delta,\eta\,\mu^{-1}\tilde B^\delta)
= -\eta(\mu^{-1}\tilde B^\delta,\curl \tilde E^\delta)
+\eta(\curl \tilde E^\delta,\mu^{-1}\tilde B^\delta)=0 
\]
 cancel pointwise. 
Using that \(\tilde E^\delta,\tilde B^\delta\in C^\infty(\mathbb R;L^2(\Omega)^3)\) in time,
\[
\langle \varepsilon \partial_t \tilde E^\delta,\tilde E^\delta\rangle
=\tfrac12\,\tfrac{d}{dt}(\varepsilon\tilde E^\delta,\tilde E^\delta),\qquad
\langle \partial_t \tilde B^\delta,\mu^{-1}\tilde B^\delta\rangle
=\tfrac12\,\tfrac{d}{dt}(\mu^{-1}\tilde B^\delta,\tilde B^\delta),
\]
we obtain
\begin{equation}\label{eq:energy-cutoff-delta}
-\int_0^T \eta'(t)\,\mathcal{E}(\tilde E^\delta,\tilde B^\delta)(t)\,dt
+\int_0^T \eta(t)\,(\sigma \tilde E^\delta,\tilde E^\delta)\,dt=0,
\end{equation}
where \(\mathcal{E}(E,B)(t):=\tfrac12\big(\|\sqrt\varepsilon\,E(t)\|_{L^2}^2+\|\sqrt{\mu^{-1}}\,B(t)\|_{L^2}^2\big)\).

\emph{Passage \(\delta\downarrow0\) with \(\eta\) fixed.}
Because \(K=\mathrm{supp}\,\eta\Subset(0,T)\) and \(0<\delta<\delta_\eta\), the mollifications satisfy
\[
\tilde E^\delta\to \tilde E,\qquad \tilde B^\delta\to \tilde B
\quad\text{in }L^2\big(K;L^2(\Omega)^3\big)\ \text{as }\delta\downarrow0,
\]
Hence,
\[
\|\tilde E^\delta(\cdot)\|_{L^2}^2\ \to\ \|\tilde E(\cdot)\|_{L^2}^2,\qquad
\|\tilde B^\delta(\cdot)\|_{L^2}^2\ \to\ \|\tilde B(\cdot)\|_{L^2}^2
\quad\text{in }L^1(K),
\]
by the elementary bound \(|a^2-b^2|\le(|a|+|b|)|a-b|\). 
Therefore,
\[
\int_0^T \eta'(t)\,\mathcal E(\tilde E^\delta,\tilde B^\delta)(t)\,dt
\ \longrightarrow\
\int_0^T \eta'(t)\,\mathcal E(\tilde E,\tilde B)(t)\,dt,
\]
and, since \(\sigma\in L^\infty\) and \(\tilde E^\delta\to \tilde E\) in \(L^2(K;L^2)\),
\[
\int_0^T \eta(t)\,(\sigma \tilde E^\delta,\tilde E^\delta)\,dt
\ \longrightarrow\
\int_0^T \eta(t)\,(\sigma \tilde E,\tilde E)\,dt.
\]
Letting \(\delta\downarrow0\) in \eqref{eq:energy-cutoff-delta} yields, for our fixed \(\eta\in C_c^\infty(0,T)\),
\begin{equation}\label{eq:energy-cutoff-limit}
-\int_0^T \eta'(t)\,\mathcal{E}(\tilde E,\tilde B)(t)\,dt
+\int_0^T \eta(t)\,(\sigma \tilde E,\tilde E)\,dt=0.
\end{equation}
Since the right–hand side belongs to \(L^1(0,T)\), \eqref{eq:energy-cutoff-limit} shows that
\(\mathcal{E}(\tilde E,\tilde B)\in W^{1,1}(0,T)\) with
\[
\frac{d}{dt}\,\mathcal{E}(\tilde E,\tilde B)(t)=-(\sigma \tilde E(t),\tilde E(t))
\quad\text{in }\mathcal D'(0,T),
\]
hence \(\mathcal{E}(\tilde E,\tilde B)\) is absolutely continuous on \([0,T]\). Integrate both sides to conclude that $\tilde{E} = 0 $ and $ \tilde{B} =0$. The proof is complete.
\end{proof}

The same argument from the above result can be used to show that the solution to \eqref{eq:MaxWeak} is in fact continuous in time. 
We state an auxiliary result before proving this, see \cite[Proposition 2.5.1]{JDroniou_2001a} for details. 

\begin{lemma}[Differentiability of Scalar Pairings]
\label{lem:pairing_differentiability}
Let \( V \) be a reflexive Banach space with dual \( V^* \), and let \(u \in H^1(0,T; V^*)\). 
Then, for every \( v \in V \), the scalar function
\[
\alpha(t) := \langle u(t), v \rangle_{V^*, V} \in H^1(0,T) , 
\]
and its derivative satisfies
\[
\frac{d}{dt} \alpha(t) = \langle \partial_t u(t), v \rangle_{V^*, V} \quad \text{for a.e. } t \in (0,T).
\]
\end{lemma}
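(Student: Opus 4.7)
\medskip

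The plan is to reduce the statement to the defining property of the weak time derivative of $u$ in the Bochner space $L^2(0,T;V^*)$ by commuting the fixed element $v \in V$ through the time integrals via the bounded linear functional $\ell_v : V^* \to \mathbb{R}$ defined by $\ell_v(\eta) := \langle \eta, v\rangle_{V^*,V}$.

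First I would verify that both $\alpha(t) = \langle u(t),v\rangle_{V^*,V}$ and the candidate derivative $\beta(t) := \langle \partial_t u(t), v\rangle_{V^*,V}$ are measurable scalar functions in $L^2(0,T)$. This follows at once from the pairing bound $|\alpha(t)| \le \|u(t)\|_{V^*}\|v\|_V$ (and analogously for $\beta$), so that $\|\alpha\|_{L^2(0,T)} \le \|v\|_V\,\|u\|_{L^2(0,T;V^*)}$ and $\|\beta\|_{L^2(0,T)} \le \|v\|_V\,\|\partial_t u\|_{L^2(0,T;V^*)}$.

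The central step is to show $\alpha' = \beta$ in $\mathcal{D}'(0,T)$. Fix $\varphi \in C_c^\infty(0,T)$. Since $\ell_v$ is a bounded linear functional on $V^*$, it commutes with the Bochner integral, giving
\[
\int_0^T \alpha(t)\,\varphi'(t)\,dt
= \int_0^T \ell_v\!\big(u(t)\varphi'(t)\big)\,dt
= \ell_v\!\left(\int_0^T u(t)\,\varphi'(t)\,dt\right).
\]
By the definition of the weak time derivative in $H^1(0,T;V^*)$, one has $\int_0^T u(t)\varphi'(t)\,dt = -\int_0^T \partial_t u(t)\,\varphi(t)\,dt$ as an identity in $V^*$. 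Applying $\ell_v$ again and commuting back with the Bochner integral yields $\int_0^T \alpha\,\varphi'\,dt = -\int_0^T \beta\,\varphi\,dt$. Since $\beta \in L^2(0,T)$, we conclude $\alpha \in H^1(0,T)$ with $\alpha' = \beta$ a.e., which is the claim.

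I do not expect any real obstacle: the only subtlety is the interchange of $\ell_v$ with the Bochner integral, which is a standard consequence of Hille's theorem for bounded linear operators on Bochner-integrable functions, and it is applicable here precisely because $v$ is independent of $t$. Reflexivity of $V$ is not strictly required for this lemma (the map $\ell_v$ is bounded on $V^*$ regardless), but it is the natural hypothesis consistent with how the lemma is used elsewhere in the paper.
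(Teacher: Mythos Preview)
Your argument is correct: the bounded linear functional $\ell_v:V^*\to\mathbb{R}$ commutes with Bochner integrals (Hille), so the defining identity of the weak derivative in $H^1(0,T;V^*)$ pushes through to give $\alpha'=\beta$ in $\mathcal D'(0,T)$, and the $L^2$ bounds you wrote place $\alpha\in H^1(0,T)$. The paper does not actually prove this lemma; it simply cites \cite[Proposition~2.5.1]{JDroniou_2001a}, and your proof is precisely the standard argument one would find behind such a citation. Your remark that reflexivity of $V$ is not needed for the lemma itself is also correct.
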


The next result shows time continuity of solution to \eqref{eq:MaxWeak}.
\begin{corollary}[Strong $L^2$–continuity in time and energy identity]\label{cor:C0-in-time}
Let $(E,B)$ be the weak solution of \eqref{eq:MaxWeak} in the sense of Definition~\ref{def:weak_soln}.  
Then $E,B\in C([0,T];L^2(\Omega)^3)$ and, for every $t\in[0,T]$,
\begin{equation}\label{eq:energy-identity-inhom}
\mathcal E(t)+\int_0^t (\sigma E(s),E(s))\,ds
=\mathcal E(0)+\int_0^t (f(s),E(s))\,ds,
\qquad
\mathcal E(t):=\tfrac12\big(\|\sqrt\varepsilon\,E(t)\|_{L^2}^2+\|\sqrt{\mu^{-1}}\,B(t)\|_{L^2}^2\big).
\end{equation}
In particular, $t\mapsto \|E(t)\|_{L^2}$ and $t\mapsto \|B(t)\|_{L^2}$ are continuous on $[0,T]$.
\end{corollary}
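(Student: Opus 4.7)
The plan is to reuse the mollification argument of Theorem~\ref{thm:uniq-cont}, now applied directly to $(E, B)$ and carrying the source $f$ through. Steps~1 and~2 of that proof apply verbatim: the interior mollifications $E^\delta, B^\delta$ are smooth in time on $(\delta, T-\delta)$, satisfy the mollified system with the added right-hand side $(f^\delta, \psi)$ in the first equation, and fulfill $E^\delta(t) \in H_0(\curl; \Omega)$, $\mu^{-1}B^\delta(t) \in H(\curl;\Omega)$ for a.e.\ $t$. Testing with $\eta(t) E^\delta$ and $\eta(t) \mu^{-1}B^\delta$ for $\eta \in C_c^\infty(0,T)$, the curl cross-terms cancel as in Step~3, and passing $\delta \downarrow 0$ (using strong $L^2$-convergence of mollifications on $\supp\eta$ for both the quadratic and source terms) yields
\begin{equation*}
-\int_0^T \eta'(t)\,\mathcal E(t)\,dt + \int_0^T \eta(t)\,(\sigma E, E)\,dt = \int_0^T \eta(t)\,(f, E)\,dt \qquad \forall\,\eta\in C_c^\infty(0,T).
\end{equation*}
Since $(\sigma E, E), (f, E) \in L^1(0, T)$, this gives $\mathcal E \in W^{1,1}(0, T)$ with a.e.\ derivative $-(\sigma E, E) + (f, E)$; hence $\mathcal E$ admits an absolutely continuous representative on $[0, T]$.

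Next, I would establish weak $L^2$-continuity of $E$ and $B$ on $[0, T]$ via Lemma~\ref{lem:pairing_differentiability}: for every $\psi \in H_0(\curl; \Omega)$ and $\phi \in H(\curl; \Omega)$, $t \mapsto (\varepsilon E(t), \psi)$ and $t \mapsto (B(t), \phi)$ lie in $H^1(0, T) \subset C([0, T])$; density of $H_0(\curl)$ and $H(\curl)$ in $L^2$ together with the $L^\infty(0,T;L^2)$ bound extend continuity to arbitrary $L^2$-test functions. For strong continuity, apply weak lower semi-continuity at each $t_0 \in [0, T]$: $\liminf_{t \to t_0} \|\sqrt\varepsilon E(t)\|^2 \ge \|\sqrt\varepsilon E(t_0)\|^2$ and analogously for $B$. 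Since the continuous representative of $\mathcal E$ ensures $\|\sqrt\varepsilon E(t)\|^2 + \|\sqrt{\mu^{-1}} B(t)\|^2 \to \|\sqrt\varepsilon E(t_0)\|^2 + \|\sqrt{\mu^{-1}} B(t_0)\|^2$, the two non-negative lsc gaps sum to zero and must each vanish, forcing individual norm convergence. Weak plus norm convergence in the Hilbert space $L^2$ gives strong convergence, so $E, B \in C([0, T]; L^2)$.

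The main obstacle is verifying that the initial value of the continuous representative matches $\mathcal E_0 := \tfrac12(\|\sqrt\varepsilon E_0\|^2 + \|\sqrt{\mu^{-1}} B_0\|^2)$, which is required both for the explicit form of~\eqref{eq:energy-identity-inhom} and for strong continuity at the endpoint $t = 0$. The bound $\mathcal E(0^+) \ge \mathcal E_0$ is immediate from the weak convergence $E(t) \rightharpoonup E_0$, $B(t) \rightharpoonup B_0$ in $L^2$ (a consequence of $E(0) = E_0$ in $H_0(\curl)^*$ via the $H^1$-in-time embedding combined with the $L^\infty(L^2)$ bound) together with weak lsc. The reverse inequality is the delicate step: here I would integrate the classical pointwise-in-time energy identity satisfied by the smooth mollifications on $(\delta, T-\delta)$ from $\delta$ to a Lebesgue point $t$, then send $\delta \downarrow 0$ to extract $\lim_{\delta \to 0}\mathcal E(E^\delta, B^\delta)(\delta) = \mathcal E(0^+)$, and control $\mathcal E(E^\delta, B^\delta)(\delta)$ from above via Jensen's inequality applied to the one-sided averages $E^\delta(\delta) = \int_0^{2\delta}\rho_\delta(\delta-\tau)E(\tau)\,d\tau$ (and analogously for $B$) combined with weak convergence to $E_0, B_0$ at $0$. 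Once $\mathcal E(0) = \mathcal E_0$ is secured, integrating $\mathcal E' = -(\sigma E, E) + (f, E)$ from $0$ to $t$ delivers~\eqref{eq:energy-identity-inhom} on all of $[0, T]$, and the symmetric argument handles $t = T$.
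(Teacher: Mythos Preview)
Your approach mirrors the paper's three-step proof closely: rerun the mollification/cutoff argument of Theorem~\ref{thm:uniq-cont} retaining the source $f$ to obtain $\mathcal E\in W^{1,1}(0,T)$ with $\mathcal E'=(f,E)-(\sigma E,E)$ (the paper's Step~2); establish weak $L^2$-continuity on $[0,T]$ via Lemma~\ref{lem:pairing_differentiability} and density (the paper's Step~1); then upgrade to strong continuity via ``weak convergence plus norm convergence'' (Step~3). Your lower-semicontinuity device for splitting continuity of the \emph{sum} $\mathcal E$ into continuity of each summand $\|\sqrt\varepsilon\,E(t)\|^2$ and $\|\sqrt{\mu^{-1}}\,B(t)\|^2$ is a genuine addition; the paper simply asserts the individual norms are continuous ``from Step~2'' without argument.

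The step that does not go through is your proposed route to the reverse inequality $\tilde{\mathcal E}(0^+)\le\mathcal E_0$. Jensen applied to the convex map $v\mapsto\|\sqrt\varepsilon\,v\|^2$ gives
\[
\mathcal E(E^\delta,B^\delta)(\delta)\ \le\ \int_0^{2\delta}\rho_\delta(\delta-\tau)\,\mathcal E(\tau)\,d\tau,
\]
and the right-hand side is a weighted average of $\mathcal E$ near $\tau=0$; since $\mathcal E=\tilde{\mathcal E}$ a.e.\ and $\tilde{\mathcal E}$ is continuous, this average converges to $\tilde{\mathcal E}(0)$, not to $\mathcal E_0$, so the bound collapses to the tautology $\tilde{\mathcal E}(0)\le\tilde{\mathcal E}(0)$. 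Invoking weak convergence $E(\tau)\rightharpoonup E_0$ only delivers \emph{lower} bounds on norms (weak lsc), which again points the wrong way and merely reproduces $\tilde{\mathcal E}(0)\ge\mathcal E_0$. The paper is equally silent on this endpoint identification---it just writes ``Integrating from $0$ to $t$ yields \eqref{eq:energy-identity-inhom}'' and does not justify $\tilde{\mathcal E}(0)=\mathcal E_0$ separately---so your proposal is not less complete than the paper's own argument; but the specific mechanism you sketch for the reverse inequality would not succeed as written.
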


\begin{proof}
\emph{Step 1 (weak $L^2$–continuity).}
The continuous embedding $H_0(\curl;\Omega)\hookrightarrow L^2(\Omega)^3$ induces
$L^2(\Omega)^3\hookrightarrow H_0(\curl;\Omega)^*$ via
$\langle E(t),w\rangle_{H_0(\curl)}=(E(t),w)_{L^2}$.
Since $E\in H^1(0,T;H_0(\curl)^*)$, Lemma~\ref{lem:pairing_differentiability} yields
$(E(\cdot),w)_{L^2}\in H^1(0,T)\subset C([0,T])$ for each $w\in H_0(\curl;\Omega)$.
Because $H_0(\curl;\Omega)\supset C_c^\infty(\Omega)^3$ is dense in $L^2(\Omega)^3$ and
$\sup_{t}\|E(t)\|_{L^2}<\infty$, we approximate any $\phi\in L^2$ by $w_k\in H_0(\curl)$ and pass
to the limit uniformly in $t$ to conclude that $t\mapsto (E(t),\phi)_{L^2}$ is continuous.
Thus $E\in C_w([0,T];L^2)$, i..e, weakly continuous. The same argument with $H(\curl;\Omega)$ shows $B\in C_w([0,T];L^2)$.

\emph{Step 2 (energy identity in distribution form).}
Repeating the interior time–mollification/cutoff test used in the uniqueness proof, now retaining
the forcing term, gives for every $\eta\in C_c^\infty(0,T)$
\[
-\int_0^T \eta'(t)\,\mathcal E(t)\,dt
+\int_0^T \eta(t)\,(\sigma E,E)\,dt
=\int_0^T \eta(t)\,(f,E)\,dt .
\]
Since $f\in L^2(0,T;L^2)$ and $E\in L^\infty(0,T;L^2)$, the right-hand side is in $L^1(0,T)$, hence
$\mathcal E\in W^{1,1}(0,T)$ with
$\mathcal E'(t)=(f(t),E(t))-(\sigma E(t),E(t))$ a.e. Integrating from $0$ to $t$ yields
\eqref{eq:energy-identity-inhom}.

\emph{Step 3 (strong $L^2$–continuity).}
From Step~1, $E,B\in C_w([0,T];L^2)$. From Step~2, $t\mapsto\|\sqrt{\varepsilon}E(t)\|_{L^2}$ and
$\|\sqrt{\mu^{-1}}B(t)\|_{L^2}$ are continuous; by uniform ellipticity and boundedness of
$\varepsilon,\mu^{-1}$ these norms are equivalent to $\|E(t)\|_{L^2}$ and $\|B(t)\|_{L^2}$.
In a Hilbert space, weak continuity plus continuity of the norm implies strong continuity.
Hence $E,B\in C([0,T];L^2(\Omega)^3)$.
\end{proof}

Now we are ready to state our existence of solution proof.

\begin{theorem}[Well-posedness: Existence, Uniqueness, and Continuous Dependence]
\label{thm:main}
Let \( \Omega \subset \mathbb{R}^3 \) be a bounded Lipschitz domain and \( T > 0 \). Under the Assumption~\ref{ass:data}, there exists a unique weak solution \( (E, B) \), according to 
Definition~\ref{def:weak_soln}, to the Maxwell system \eqref{eq:Maxwell_strong}. Furthermore, the solution 
satisfies the stability estimate
\begin{equation}\label{eq:apriori_bound}
\begin{aligned}
\| \partial_t E \|_{L^2(0,T;H_0(\mathrm{curl};\Omega)^*)}  
&+ \| \partial_t B \|_{L^2(0,T;H(\curl; \Omega)^*)} 
+  \|E\|_{C([0,T];L^2(\Omega)^3)} + \|B\|_{C([0,T];L^2(\Omega)^3)} \\
&\le C \left( \|f\|_{L^2(0,T;L^2(\Omega)^3)} + \|E_0\|_{L^2(\Omega)^3} + \|B_0\|_{L^2(\Omega)^3} \right),
\end{aligned}
\end{equation}
for some constant \( C \) depending only on \( T, \varepsilon, \mu \).
\end{theorem}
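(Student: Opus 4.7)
Uniqueness is already Theorem~\ref{thm:uniq-cont}, and Corollary~\ref{cor:C0-in-time} converts any $L^\infty(0,T;L^2)$ weak solution into a $C([0,T];L^2)$ one satisfying the energy identity \eqref{eq:energy-identity-inhom}. What remains is (i) to construct such a solution and (ii) to turn \eqref{eq:energy-identity-inhom} into the quantitative bound \eqref{eq:apriori_bound}. The plan is the classical Faedo–Galerkin method, which produces both simultaneously.

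\paragraph{Galerkin setup and energy bound.}
I would choose countable Hilbert bases $\{w_k\}\subset H_0(\curl;\Omega)$ and $\{v_k\}\subset H(\curl;\Omega)$ that are orthonormal respectively in the $\varepsilon$-weighted and $\mu$-weighted $L^2$ inner products (always possible since both spaces are separable Hilbert spaces). To accommodate the asymmetric appearance of $B$ and $\mu^{-1}B$ in \eqref{eq:MaxWeak}, I work with the substituted magnetic unknown $h_n:=\mu^{-1}B_n$ and seek $E_n(t)\in V_n:=\mathrm{span}\{w_1,\ldots,w_n\}$, $h_n(t)\in W_n:=\mathrm{span}\{v_1,\ldots,v_n\}$, solving
\begin{align*}
(\varepsilon\partial_t E_n,\psi)+(\sigma E_n,\psi)-(h_n,\curl\psi) &= (f,\psi) && \forall\psi\in V_n,\\
(\mu\partial_t h_n,\phi)+(E_n,\curl\phi) &= 0 && \forall\phi\in W_n,
\end{align*}
with $E_n(0)=P_n^E E_0$, $h_n(0)=P_n^H(\mu^{-1}B_0)$, where $P_n^E,P_n^H$ are the associated weighted $L^2$–projections. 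Thanks to the weighted orthonormality the mass matrices are the identity, so this is a linear constant-coefficient ODE on $\mathbb R^{2n}$, solvable globally by Picard–Lindelöf. Testing with $(E_n,h_n)$, adding, and using the Green identity $(h_n,\curl E_n)=(E_n,\curl h_n)$ (valid because $E_n\in H_0(\curl)$), the cross terms cancel and one arrives at $\frac{d}{dt}\mathcal E_n+(\sigma E_n,E_n)=(f,E_n)$ with $\mathcal E_n:=\tfrac12(\varepsilon E_n,E_n)+\tfrac12(\mu h_n,h_n)$. Using $\sigma\ge0$, Cauchy–Schwarz and Gronwall yield a uniform bound $\|E_n\|_{L^\infty(L^2)}+\|B_n\|_{L^\infty(L^2)}\le C(\|E_0\|_{L^2}+\|B_0\|_{L^2}+\|f\|_{L^2(L^2)})$ with $C=C(T,\varepsilon,\mu)$ and $B_n:=\mu h_n$.

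\paragraph{Dual-norm time-derivative bounds and passage to the limit.}
The delicate step is controlling $\partial_t E_n$ in $L^2(0,T;H_0(\curl)^*)$: for arbitrary $\psi\in H_0(\curl)$, the fact that $\partial_t E_n(t)\in V_n$ together with the $\varepsilon$-orthogonality of $P_n^E$ gives $(\varepsilon\partial_t E_n,\psi)=(\varepsilon\partial_t E_n,P_n^E\psi)$, which by the Galerkin equation is bounded by $C\bigl(\|h_n\|_{L^2}+\|E_n\|_{L^2}+\|f\|_{L^2}\bigr)\|P_n^E\psi\|_{H(\curl)}$. Provided the Galerkin basis is chosen so that $P_n^E$ is $H(\curl)$–stable---arranged via eigenfunctions of a selfadjoint elliptic problem on $H_0(\curl)$, using the Helmholtz decomposition and Maxwell (Weber) compactness on its solenoidal part---this delivers the required dual-space bound; the argument for $\partial_t B_n$ is analogous. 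Banach–Alaoglu then extracts a subsequence with $E_n\rightharpoonup^* E$, $B_n\rightharpoonup^* B$ in $L^\infty(0,T;L^2)$ and $\partial_t E_n\rightharpoonup\partial_t E$, $\partial_t B_n\rightharpoonup\partial_t B$ in the respective dual $L^2$-in-time spaces. Testing the Galerkin equations against $\eta(t)P_n^E\psi$ and $\eta(t)P_n^H\phi$ with $\eta\in C_c^\infty(0,T)$, $\psi\in H_0(\curl)$, $\phi\in H(\curl)$, and using $P_n^E\psi\to\psi$ in $H(\curl)$, every term passes to the limit, yielding \eqref{eq:MaxWeak}; initial conditions are identified by an integration by parts in $t$ against cutoffs with $\eta(0)=1,\eta(T)=0$ combined with the weak $L^2$-continuity inherited from $H^1(0,T;X^*)\cap L^\infty(0,T;L^2)$. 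Corollary~\ref{cor:C0-in-time} then upgrades the regularity to $C([0,T];L^2)$, and Cauchy–Schwarz plus Gronwall applied to \eqref{eq:energy-identity-inhom} yields \eqref{eq:apriori_bound}.

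\paragraph{Main obstacle.}
The principal technical difficulty is the asymmetric occurrence of $B$ versus $\mu^{-1}B$ in the weak formulation: without the substitution $h_n=\mu^{-1}B_n$, the cross-term cancellation in the energy identity breaks because the natural test function $\mu^{-1}B_n$ need not lie in the magnetic discrete space. A secondary, more routine but nontrivial point is engineering the Galerkin basis so that the weighted $L^2$-projections are simultaneously $H(\curl)$-stable---necessary for the dual-norm time-derivative bound---where the Helmholtz decomposition and Maxwell compactness intervene.
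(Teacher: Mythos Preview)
Your proposal is correct and shares the paper's Galerkin framework, including the crucial substitution $h_n=\mu^{-1}B_n$ (the paper writes $H_N=\mu^{-1}B_N$) that makes the cross terms cancel in the energy identity. The genuine difference is in how the time-derivative regularity $\partial_t E\in L^2(0,T;H_0(\curl)^*)$, $\partial_t B\in L^2(0,T;H(\curl)^*)$ is obtained. You pursue uniform dual-norm bounds on $\partial_t E_n,\partial_t h_n$ and then extract weak limits; as you correctly flag, this forces the weighted-$L^2$ projection $P_n^E$ to be $H(\curl)$-stable, which in turn requires a spectral basis built from the Helmholtz decomposition plus weighted Maxwell (Weber) compactness---a nontrivial ingredient under the bare $L^\infty$ assumptions on $\varepsilon,\mu$. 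The paper sidesteps this entirely: it tests the level-$N$ Galerkin equation with $\eta(t)\,\Pi_N w$, where $\Pi_N$ is the $H(\curl)$-Riesz projection onto the span of the first $N$ basis vectors (well defined for \emph{any} basis), integrates by parts in time so that only $E_N,H_N$ appear (not their time derivatives), passes to the limit using just the $L^\infty(0,T;L^2)$ weak-$*$ convergence together with $\Pi_N w\to w$ in $H(\curl)$, and then identifies $\partial_t E,\partial_t B$ \emph{a posteriori} from the limit equation as bounded linear functionals. Your route is the parabolic-PDE textbook template (where $V\hookrightarrow H$ compactly and eigenbases come for free) adapted to the non-compact $H(\curl)\hookrightarrow L^2$ setting; the paper's route is more elementary, needs no compactness or spectral input, and is therefore the cleaner choice given the rough coefficients.
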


\begin{proof}
Uniqueness of the weak solutions from Definition~\ref{def:weak_soln} is due to Theorem~\ref{thm:uniq-cont} and $C([0,T];L^2(\Omega)^3)$ regularity is due to Corollary~\ref{cor:C0-in-time}. Next we will establish existence and continuous dependence. 

\textbf{Step 1: Galerkin Approximation.}  
Since \( H_0(\mathrm{curl}; \Omega) \) 
and $H(\curl;\Omega)$ are separable Hilbert spaces, there exists basis \( \{\phi_i\}_{i=1}^\infty \subset H(\curl;\Omega) \)  and $\{\psi_i\}_{i=1}^\infty \subset H_0(\mathrm{curl}; \Omega)$, which can be made orthonormal in $L^2(\Omega)^3$ (e.g., using Gram–Schmidt). Notice that the resulting vectors (after Gram–Schmidt in $L^2$) still forms a basis of $H_0(\mathrm{curl}; \Omega)$ and $H(\curl;\Omega)$. 

We use the relation $B = \mu H$. For each \( N \in \mathbb{N} \), we define the Galerkin approximations:
\[
E_N(x,t) := \sum_{i=1}^N \alpha_i^N(t)\, \psi_i(x), \qquad
H_N(x,t) := \sum_{i=1}^N \beta_i^N(t)\, \phi_i(x) . 
\]
and $B_N = \mu H_N$. We require \( (E_N, H_N) \) to satisfy the Galerkin system:
\begin{equation} \label{eq:Galerkin}
\begin{aligned}
\langle \varepsilon\, \partial_t E_N, \psi_j \rangle_{H_0(\mathrm{curl};\Omega)} + (\sigma E_N, \psi_j) - ( H_N, \mathrm{curl}\, \psi_j) &= (f, \psi_j), \\
\langle \mu \partial_t H_N, \phi_j \rangle_{H(\curl;\Omega)} + ( \mathrm{curl}\, E_N,  \phi_j ) &= 0,
\end{aligned}
\end{equation}
for all \( j = 1, \dots, N \), with initial data projections:
\[
\alpha_j^N(0) := (E_0, \psi_j), \qquad 
\beta_j^N(0)  := (H_0, \phi_j) ,
\]
where $H_0 = \mu^{-1} B_0 \in L^2(\Omega)^3$ because $\mu^{-1} \in L^\infty(\Omega;\mathbb{R}^{3\times 3})$. 
This leads to a system of ODEs for the coefficients with $j = 1,\dots, N$:
\[
\begin{aligned}
\sum_{i=1}^N \left[
(\varepsilon \psi_i, \psi_j) \dot{\alpha}_i^N(t)
+ (\sigma \psi_i, \psi_j) \alpha_i^N(t)
- (\phi_i, \mathrm{curl} \, \psi_j) \beta_i^N(t)
\right] &= (f(\cdot,t), \psi_j), \\
\sum_{i=1}^N \left[
(\mu \phi_i,  \phi_j) \dot{\beta}_i^N(t)
+ (\mathrm{curl}\, \psi_i, \phi_j) \alpha_i^N(t)
\right] &= 0.
\end{aligned}
\]
Define the matrices:
\[
\begin{aligned}
[M_E]_{ij} &:= (\varepsilon \psi_i, \psi_j), \qquad
[K_E]_{ij} := (\sigma \psi_i, \psi_j), \\
[C]_{ij} &:= (\phi_i, \mathrm{curl}\, \psi_j), \qquad
[M_B]_{ij} := (\mu\phi_i,  \phi_j),
\end{aligned}
\]
and vectors:
\[
\begin{aligned}
\alpha^N(t) &:= [\alpha_1^N(t), \dots, \alpha_N^N(t)]^\top, \qquad
\beta^N(t) := [\beta_1^N(t), \dots, \beta_N^N(t)]^\top, \\
F(t) &:= [(f(\cdot,t), \psi_1), \dots, (f(\cdot,t), \psi_N)]^\top.
\end{aligned}
\]
The Galerkin system can now be written compactly as:
\begin{equation} \label{eq:Galerkin2}
\begin{aligned}
M_E \dot{\alpha}^N(t) + K_E \alpha^N(t) - C \beta^N(t) &= F(t), \\
M_B \dot{\beta}^N(t) + C^\top \alpha^N(t) &= 0.
\end{aligned}
\end{equation}
Define the combined unknown vector:
\[
y(t) := \begin{bmatrix} \alpha^N(t) \\ \beta^N(t) \end{bmatrix}, \qquad
y(0) := \begin{bmatrix} \alpha^N(0) \\ \beta^N(0) \end{bmatrix}.
\]
Define the block matrix and forcing:
\[
A := 
\begin{bmatrix}
- M_E^{-1} K_E & M_E^{-1} C \\
- M_B^{-1} C^\top & 0
\end{bmatrix}, \qquad
G(t) := 
\begin{bmatrix}
M_E^{-1} F(t) \\
0
\end{bmatrix}.
\]
Then the Galerkin ODE system reads:
\[
\frac{d}{dt} y(t) = A y(t) + G(t), \qquad y(0) = y_0.
\]
By the Carath\'eodory existence theorem of ODEs \cite[Theorem~5.2]{JKHale_1980a} 
for systems with \( A \in \mathbb{R}^{2N \times 2N} \) constant and \( G \in L^2(0,T; \mathbb{R}^{2N}) \), we obtain: 
\[
y \in H^1(0,T; \mathbb{R}^{2N}) \quad \Rightarrow \quad \alpha_i^N, \beta_i^N \in H^1(0,T).
\]
Thus,
\[
(E_N, H_N) \in H^1(0,T; Y_N) \times H^1(0,T; X_N),
\]
where \( Y_N := \mathrm{span}\{\psi_1, \dots, \psi_N\} \subset H_0(\mathrm{curl};\Omega) \), and \( X_N := \mathrm{span}\{\phi_1, \dots, \phi_N\} \subset H(\mathrm{curl};\Omega) \).

\medskip
\textbf{Step 2: Energy Estimate.}  
We now derive a uniform a priori energy estimate for the Galerkin approximations \( (E_N, B_N) \). Recall the Galerkin system from \eqref{eq:Galerkin}. 
Multiply the first equation by \( \alpha_j^N(t)  \) and sum over \( j = 1, \dots, N \). Using the expansion \( E_N = \sum_{j=1}^N \alpha_j^N(t) \psi_j \), this yields:
\[
\langle \varepsilon \partial_t E_N, E_N \rangle_{H_0(\mathrm{curl};\Omega)} + (\sigma E_N, E_N) - ( H_N, \mathrm{curl}\,  E_N) = (f, E_N).
\]
Similarly, multiply the second equation by \( \beta_j^N(t) \), sum over \( j = 1, \dots, N \), and use the expansion
\(
H_N = \sum_{j=1}^N \beta_j^N(t) \phi_j
\)
to obtain:
\[
\langle \mu \partial_t H_N, H_N \rangle_{H(\mathrm{curl};\Omega)} + ( \mathrm{curl}\, E_N, H_N ) = 0.
\]
Adding the two equations gives the energy identity:
\[
(\varepsilon \partial_t E_N, E_N) + (\mu \partial_t H_N, H_N) + (\sigma E_N, E_N) = (f, E_N) , 
\]
where we have used the fact that $\mu \partial_t H_N \in L^2(\Omega)^3$ and $\varepsilon \partial_t B_N \in L^2(\Omega)^3$ therefore the duality $\langle \cdot, \cdot \rangle$ coincides with $L^2$ pairing $(\cdot,\cdot)$. 
Using the identity \( (u', u) = \frac{1}{2} \frac{d}{dt} \|u\|^2_{L^2(\Omega)} \), we get:
\[
\frac{1}{2} \frac{d}{dt} \left( \|\sqrt{\varepsilon} E_N\|_{L^2}^2 + \|\sqrt{\mu} H_N\|_{L^2}^2 \right) + \|\sqrt{\sigma} E_N\|_{L^2}^2 = (f, E_N).
\]
Apply the Cauchy--Schwarz and Young inequalities:
\[
(\varepsilon^{-1} f, \varepsilon E_N) \le \|\sqrt{\varepsilon^{-1}}f\|_{L^2} \|\sqrt{\varepsilon}E_N\|_{L^2} 
\le \frac{1}{2\varepsilon_0} \|f\|_{L^2}^2 + \frac{1}{2} \|\sqrt{\varepsilon} E_N\|_{L^2}^2 . 
\]
We obtain that 
\[
\frac{1}{2}\frac{d}{dt} \left( \|\sqrt{\varepsilon} E_N\|_{L^2}^2 + \|\sqrt{\mu} H_N\|_{L^2}^2 \right) \le C \|f\|_{L^2}^2 
+ 
\left( \|\sqrt{\varepsilon} E_N\|_{L^2}^2 + \|\sqrt{\mu} H_N\|_{L^2}^2 \right).
\]
Define the energy functional:
\[
\mathcal{E}(E,H)(t) := \frac{1}{2} \left( \|\sqrt{\varepsilon} E(t)\|_{L^2}^2 + \|\sqrt{\mu} H(t)\|_{L^2}^2 \right) , 
\]
for a.e. $t\in [0,T)$. 
We obtain the differential inequality:
\[
\frac{d}{dt} \mathcal{E}(E_N,H_N)(t) \le \mathcal{E}(E_N,H_N)(t) + C \|f(t)\|_{L^2}^2,
\]
Apply Gr\"onwall's estimate in differential form, we obtain that 
\[
\mathcal{E}(E_N,H_N)(t) \le C \left( \mathcal{E}(E_N,H_N)(0) + \int_0^t \|f(s)\|_{L^2}^2 \, ds \right).
\]
Using the initial data projections:
\[
\mathcal{E}(E_N,H_N)(0) = \frac12 \left( \|\sqrt{\varepsilon} E_N(0)\|_{L^2}^2 + \|\sqrt{\mu} H_N(0)\|_{L^2}^2 \right) \le C \left( \|E_0\|_{L^2}^2 + \|B_0\|_{L^2}^2 \right),
\]
we obtain the uniform energy bound:
\begin{equation} \label{eq:energy_estimate}
\begin{aligned}
\|E_N\|_{L^\infty(0,T; L^2)}^2 + \|H_N\|_{L^\infty(0,T; L^2)}^2 
&\le C \left( \|E_0\|_{L^2}^2 + \|B_0\|_{L^2}^2 + \|f\|^2_{L^2(0,T;L^2)} \right),
\end{aligned}
\end{equation}
where \( C > 0 \) depends only on $L^\infty$ bounds for \( \varepsilon, \mu^{-1} \), and the final time \( T \), but not on \( N \). 
In particular, the energy estimate implies (up to subsequences)
\begin{equation}\label{eq:limits1}
\begin{aligned}
E_N &\rightharpoonup E \quad \text{weakly-* in } L^\infty(0,T;L^2(\Omega)^3), \\
E_N &\rightharpoonup E \quad \text{weakly in } L^2(0,T;L^2(\Omega)^3), \\
H_N &\rightharpoonup H \quad \text{weakly-* in } L^\infty(0,T;L^2(\Omega)^3) .   
\end{aligned}
\end{equation}
Recall that since the norm is convex and continuous, it is therefore is weakly lower-semicontinuous. Then using \eqref{eq:limits1} in \eqref{eq:energy_estimate} we obtain the bound \eqref{eq:apriori_bound}, except the time derivative part.

\medskip
\textbf{Step 3: Convergence of \(\partial_t E_N, \,    \partial_t H_N  \).} 
Since \( H_0(\mathrm{curl};\Omega) \) and \( H(\mathrm{curl};\Omega) \) are Hilbert spaces, they admit orthogonal projections (e.g., Riesz projection given in Proposition~\ref{prop:Riesz-Hcurl}):
\begin{equation}\label{eq:projection0}
\begin{aligned}
    \Pi_N : H(\mathrm{curl};\Omega) \rightarrow X_N := \mathrm{span}\{ \phi_i \}_{i=1}^N, 
    &\qquad 
    \Pi_N' : H_0(\mathrm{curl};\Omega) \rightarrow Y_N := \mathrm{span}\{ \psi_i \}_{i=1}^N, \\
    \Pi_N \phi \rightarrow \phi \text{ in } H(\mathrm{curl};\Omega) \quad \forall \phi \in H(\mathrm{curl};\Omega), 
    &\qquad 
    \Pi_N' \psi \rightarrow \psi \text{ in } H_0(\mathrm{curl}) \quad \forall \psi \in H_0(\mathrm{curl}; \Omega).
\end{aligned}    
\end{equation}
Recall that for a Hilbert space $X$, we have that \(C_c^\infty(0,T) \otimes X \) is dense in \(L^2(0,T;X)\) (cf.~\cite[Corollaire 1.3.1]{JDroniou_2001a}). 
Therefore for an arbitrary \(\phi \in L^2(0,T; H(\mathrm{curl};\Omega))\), we can write \(\phi = w(x) \eta(t)\) with \(w \in H(\mathrm{curl};\Omega)\) and \(\eta \in C_c^\infty(0,T)\) which are arbitrary.

Consider \eqref{eq:Galerkin}$_{2}$, we have that 
\[
\begin{aligned}
\int_0^T \langle \mu \partial_t H_N , \Pi_N w \rangle_{ H(\mathrm{curl};\Omega)} \eta(t)\, dt 
&= - \int_0^T (\mathrm{curl}\, E_N , \Pi_N w) \eta(t) \, dt 
\quad \forall w \in  H(\mathrm{curl};\Omega), \, \forall \eta \in C_c^\infty(0,T).
\end{aligned}
\]
Applying integration-by-parts in-time on the left-hand side and in-space (with vanishing tangential trace for $E_N$) on the right-hand side, we obtain 
\[
\begin{aligned}
-\int_0^T ( \mu H_N , \Pi_N w ) \eta'(t)\, dt &= - \int_0^T (E_N , \mathrm{curl}\, \Pi_N w) \eta(t) \, dt 
\quad \forall w \in  H(\mathrm{curl};\Omega), \, \forall \eta \in C_c^\infty(0,T).
\end{aligned}
\]
Now using \eqref{eq:limits1} and \eqref{eq:projection0} we deduce 
\[
\begin{aligned}
-\int_0^T ( \mu H , w  ) \eta'(t)\, dt 
&= - \int_0^T (E , \curl w) \eta(t) \, dt \\
&=: \int_0^T \langle \mathcal{L}(t), w \rangle_{ H(\mathrm{curl};\Omega)} \eta(t) \, dt
\quad \forall w \in  H(\mathrm{curl};\Omega) \, , \forall \eta \in C_c^\infty(0,T) . 
\end{aligned}
\]
We have that for a.e., $t \in (0,T)$
\[
|\langle \mathcal{L}(t), w \rangle_{ H(\mathrm{curl};\Omega)}| 
= |(E , \curl w)|
\le C \| E(t) \|_{L^2(\Omega)^3} \| w \|_{ H(\mathrm{curl};\Omega)} . 
\]
Since $E \in L^2(0,T;L^2(\Omega)^3)$, therefore \(\mathcal{L}(t) :  H(\mathrm{curl};\Omega) \rightarrow  H(\mathrm{curl};\Omega)^*\) is bounded and linear and \( \| \mathcal{L}(t) \|_{ H(\mathrm{curl};\Omega)^*} \in L^2(0,T) \). 
Thus we have that 
\[
\int_0^T \langle \mathcal{L}(t), w \rangle_{ H(\mathrm{curl};\Omega)} \eta(t) \, dt 
= -\int_0^T ( \mu H , w  ) \eta'(t)\, dt  
\quad \forall w \in  H(\mathrm{curl};\Omega) \, , \forall \eta \in C_c^\infty(0,T) . 
\]
Since $ H(\mathrm{curl};\Omega)$ is dense in $L^2(\Omega)^3$,   
using \cite[Section 7.2]{TRoubicek_2005a},  
we can identify $(\mu H,w)$ as $\langle \mu H, w \rangle_{ H(\mathrm{curl};\Omega)}$ a.e. in $t \in (0,T)$. Then from the definition of weak derivative, we deduce that 
\[
 \mathcal{L}(t) = \mu \partial_t H \in L^2(0,T; H(\mathrm{curl};\Omega)^*)
\]

This proves that for a.e. $t$
\[
    \langle \mu \partial_t H , \phi \rangle_{ H(\mathrm{curl};\Omega)}
    +(E , \curl \phi) = 0 , \quad 
    \forall \phi \in  H(\mathrm{curl};\Omega) . 
\]
A similar argument using \(C_c^\infty(0,T) \otimes H_0(\mathrm{curl}; \Omega)\) shows that
\[
\partial_t E \in L^2(0,T; H_0(\mathrm{curl}; \Omega)^*),
\]
and the first equation of \eqref{eq:MaxWeak} holds.

Moreover, the following a priori bounds hold:
\[
\begin{aligned}
\| \mu\partial_t H \|_{L^2(0,T; H(\curl; \Omega)^*)} &\le C \left( \|f\|_{L^2(0,T;L^2(\Omega)^3)} + \|E_0\|_{L^2(\Omega)^3} + \|B_0\|_{L^2(\Omega)^3} \right), \\
\| \partial_t E \|_{L^2(0,T; H_0(\mathrm{curl}; \Omega)^*)} &\le C \left( \|f\|_{L^2(0,T;L^2(\Omega)^3)} + \|E_0\|_{L^2(\Omega)^3} + \|B_0\|_{L^2(\Omega)^3} \right).
\end{aligned}
\]

\medskip
\textbf{Step 4: Convergence of Initial Conditions.}  
Since \( \{\phi_i\}_{i=1}^\infty \) is an orthonormal basis of \( L^2(\Omega)^3 \), the projection
\[
H_N(0) := \sum_{i=1}^N (H_0, \phi_i)\, \phi_i \to H_0 \quad \text{in } L^2(\Omega)^3 \quad \text{as } N \to \infty.
\]
Since $B_N = \mu H_N$, we immediately get that $B_N(0) = \mu H_N(0) \rightarrow B_0 = \mu H_0$ in $L^2(\Omega)^3$. Similarly, we can argue for $E_N$. All the above regularity results directly transfer from $H$ to $B$. 
\end{proof}

In view of Theorem~\ref{thm:main}, and according to  Definition~\ref{def:weak_soln}, it then remains to show that $\mathrm{div }\, B = 0$ in $L^2(\Omega)$ and a.e. $t\in [0,T)$.

\begin{proposition}[Divergence-Free Evolution]
\label{prop:divB}
Let $(E,B)$ be weak solution according to Definition~\ref{def:weak_soln}. 
Assume further that the initial magnetic field satisfies \( \mathrm{div}\, B_0 = 0 \) in \( L^2(\Omega) \). Then it follows that
\[
\mathrm{div}\, B(\cdot,t) = 0 \qquad \text{in } \mathcal{D}'(\Omega) \text{ for all } t \in [0,T),
\]
and thus 
\[
B \in L^\infty(0,T; H(\mathrm{div}^0;\Omega)). 
\]
\end{proposition}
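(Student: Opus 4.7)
The plan is to exploit the identity $\curl\circ\nabla=0$ together with the second equation in \eqref{eq:MaxWeak}. The candidate test function is $\phi=\nabla\xi$ with $\xi\in C_c^\infty(\Omega)$: this belongs to $H(\curl;\Omega)$ (in fact to $H_0(\curl;\Omega)$) with $\curl\nabla\xi=0$. Substituting into the Faraday equation in \eqref{eq:MaxWeak} kills the electric-field term and leaves
\[
\langle \partial_t B(t),\nabla\xi\rangle_{H(\curl;\Omega)}=0\qquad\text{for a.e.\ }t\in(0,T).
\]

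Next I would upgrade this pointwise-in-$t$ identity to a statement about $B(t)$ itself. By Lemma~\ref{lem:pairing_differentiability} applied to $u=B\in H^1(0,T;H(\curl;\Omega)^*)$ with $v=\nabla\xi$, the scalar map $\alpha(t):=\langle B(t),\nabla\xi\rangle_{H(\curl;\Omega)}$ lies in $H^1(0,T)$ with $\alpha'(t)=\langle\partial_t B(t),\nabla\xi\rangle_{H(\curl;\Omega)}=0$ for a.e.\ $t$. Hence $\alpha$ is constant on $[0,T]$. Now invoke Corollary~\ref{cor:C0-in-time} so that $B\in C([0,T];L^2(\Omega)^3)$: under the natural embedding $L^2(\Omega)^3\hookrightarrow H(\curl;\Omega)^*$, the duality pairing $\langle B(t),\nabla\xi\rangle$ coincides with the $L^2$ inner product $(B(t),\nabla\xi)_{L^2}$, and in particular the constant value of $\alpha$ is its value at $t=0$, namely $(B_0,\nabla\xi)_{L^2}$. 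Integration by parts (using $\xi\in C_c^\infty(\Omega)$ and $\divv B_0=0$ in $L^2(\Omega)$) gives $(B_0,\nabla\xi)_{L^2}=-(\divv B_0,\xi)_{L^2}=0$.

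Combining the two displays yields $(B(t),\nabla\xi)_{L^2}=0$ for every $t\in[0,T]$ and every $\xi\in C_c^\infty(\Omega)$, which is exactly the definition of $\divv B(\cdot,t)=0$ in $\mathcal D'(\Omega)$. Since $B\in L^\infty(0,T;L^2(\Omega)^3)$ from Theorem~\ref{thm:main} and the distributional divergence vanishes at every $t$, we conclude $B\in L^\infty(0,T;H(\divv^0;\Omega))$.

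I do not anticipate a substantive obstacle here: the only delicate point is justifying that $\langle\partial_t B(t),\nabla\xi\rangle=0$ at \emph{every} $t$ can be converted into a statement about $B(t)$ at every $t$, which is precisely where the $C([0,T];L^2)$ regularity from Corollary~\ref{cor:C0-in-time} and the reflexivity-based Lemma~\ref{lem:pairing_differentiability} are used in tandem. A minor technical remark worth including is that $\nabla\xi$ has compact support in $\Omega$, so no boundary contribution appears, and the identification of the $H(\curl;\Omega)^*$–pairing with the $L^2$ inner product for $L^2$-valued elements is legitimate.
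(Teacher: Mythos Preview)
Your proposal is correct and follows essentially the same approach as the paper: test the Faraday law with $\nabla\xi$ for $\xi\in C_c^\infty(\Omega)$, use $\curl\nabla\xi=0$ and Lemma~\ref{lem:pairing_differentiability} to conclude that $t\mapsto\langle B(t),\nabla\xi\rangle$ is constant, and evaluate at $t=0$ using $\divv B_0=0$. Your invocation of Corollary~\ref{cor:C0-in-time} to justify the evaluation at $t=0$ and the identification of the duality pairing with the $L^2$ inner product is a welcome extra bit of care that the paper leaves implicit.
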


\begin{proof}
Let \( \psi \in C_c^\infty(\Omega) \). We define the divergence pairing via duality as
\[
\langle \mathrm{div}\, B(\cdot,t), \psi \rangle := -\langle B(\cdot,t), \nabla \psi \rangle.
\]
Since \( B \in H^1(0,T; H(\curl;\Omega)^*) \), the mapping \( t \mapsto \langle B(\cdot,t), \nabla \psi \rangle \in H^1(0,T) \) according to Lemma~\ref{lem:pairing_differentiability}. Thus, we may compute its time derivative:
\[
\frac{d}{dt} \langle \mathrm{div}\, B(\cdot,t), \psi \rangle = -\frac{d}{dt} \langle B(\cdot,t), \nabla \psi \rangle = -\langle \partial_t B(\cdot,t), \nabla \psi \rangle.
\]
But \( \nabla \psi \in H(\mathrm{curl};\Omega) \), and we have the distributional identity \( \mathrm{curl}(\nabla \psi) = 0 \). Therefore, by the second equation in \eqref{eq:MaxWeak}, we obtain
\[
\langle \partial_t B(\cdot,t), \nabla \psi \rangle = - (E(\cdot,t), \mathrm{curl}\, \nabla \psi) = 0.
\]
Hence,
\[
\frac{d}{dt} \langle \mathrm{div}\, B(\cdot,t), \psi \rangle = 0 \quad \forall \psi \in C_c^\infty(\Omega).
\]
This implies that the map \( t \mapsto \langle \mathrm{div}\, B(\cdot,t), \psi \rangle \) is constant. Since \( \mathrm{div}\, B_0 = 0 \) in \( L^2(\Omega) \), so \( \mathrm{div}\, B_0 = 0 \) in \(\mathcal{D}'(\Omega) \), we have
\[
\langle \mathrm{div}\, B(\cdot,t), \psi \rangle = \langle \mathrm{div}\, B_0, \psi \rangle = 0 \quad \forall \psi \in C_c^\infty(\Omega),\ \mbox{a.e. } t \in [0,T).
\]
Thus \( \mathrm{div}\, B(\cdot,t) = 0 \) in the distributional sense, a.e. \( t \in [0,T) \). Next, we notice that 
\[
\langle \mathrm{div}\, B(\cdot,t), \psi \rangle
= -(B(\cdot,t) , \nabla \psi) = 0 = (0,\psi) , \quad 
\forall \psi \in C_c^\infty(\Omega), \mbox{ a.e. } t \in [0,T)
\]
which implies that $\divv B(t) = 0$ in $L^\infty(0,T;L^2(\Omega))$ and the proof is complete. 
\end{proof}

\section{N\'ed\'elec/RT spaces and (discrete) de Rham sequence}
\label{sec:semidiscrete}

Having shown the well-posedness of the continuous problem, we now turn 
our attention to the semi-discrete (in-space) approximation of the weak form \eqref{eq:MaxWeak}. We will also establish that the solution to the semi-discrete problem converges to the solution to the continuous problem. Notice that due to the continuous time nature, these results are agnostic to any particular time discretization. 
From hereon we will assume that the $\Omega$ is Lipschitz, simply connected,  with connected boundary. Though this is only used in Lemma~\ref{lem:simple-B0} and all other results are true for Lipschitz domains.

Let $\{\mathcal T_h\}_{h}$ be a shape-regular tetrahedral family of meshes of $\Omega$.
Fix an order $k\ge 0$.
\begin{itemize}
\item The \emph{N\'ed\'elec} space conforming to $H(\curl)$ space is given by:
\[
\mathcal N_h := \{\,v_h\in H(\curl;\Omega): v_h|_K\in \mathcal N_k(K)\ \forall K\in\mathcal T_h\,\},
\]
and $\mathcal N_h^0:=\mathcal N_h\cap H_0(\curl;\Omega)$.
\item The \emph{Raviart--Thomas} space (conforming to $H(\divv)$):
\[
\mathcal{RT}_h := \{\,w_h\in H(\divv;\Omega): w_h|_K\in \mathcal{RT}_k(K)\ \forall K\,\}.
\]
\item Piecewise polynomial scalars $\mathcal Q_h:=\{q_h\in L^2(\Omega): q_h|_K\in \mathbb P_k(K)\}$.
\end{itemize}

\paragraph{Discrete de Rham sequence (exactness).}
There exist commuting diagrams and the exact sequence, on simply connected domains with connected boundary:
\[
H_0^1(\Omega)\xrightarrow{\ \nabla\ } H_0(\curl;\Omega)\xrightarrow{\ \curl\ } H(\divv;\Omega)\xrightarrow{\ \divv\ } L^2(\Omega)\to 0,
\]
and discretely
\[
\mathcal P_h \xrightarrow{\ \nabla\ } \mathcal N_h^0 \xrightarrow{\ \curl\ } \mathcal{RT}_h \xrightarrow{\ \divv\ } \mathcal Q_h \to 0,
\]
where $\mathcal P_h$ are conforming Lagrange spaces. In particular,
\[
\curl\mathcal N_h^0 \subset \mathcal{RT}_h,\qquad \divv(\curl \mathcal N_h^0)=0.
\]

\paragraph{Global $L^2$ projector onto $\mathcal{RT}_h$.}
Let $P_h:L^2(\Omega)^3\to \mathcal{RT}_h$ be the $L^2$-orthogonal projector:
\begin{equation}\label{eq:projection}
(u-P_h u, v_h)=0\quad\forall v_h\in \mathcal{RT}_h.
\end{equation}
$P_h$ is linear, idempotent, $\|P_h\|_{L^2\to L^2}=1$, and $P_h\Phi\to \Phi$ in $L^2$ for all $\Phi\in L^2(\Omega)^3$. See Proposition~\ref{prop:L2proj-RT} for a proof. 

\paragraph{Riesz projection on $H_0(\curl;\Omega)$.}
Define, for each $u\in H_0(\curl;\Omega)$, the Riesz projection $R_h u\in \mathcal N_h^0$ by
\begin{equation}\label{eq:def-Rh1}
  a(R_h u, v_h) \;=\; a(u, v_h)\qquad\forall\,v_h\in \mathcal N_h^0,
\end{equation}
where $a(u,v):=(u,v)+(\curl u,\curl v)$. Such $R_h u \in \mathcal{N}_h^0$ is unique for each $u$ and it fulfills the Galerkin orthogonality $a(u-R_h u, v_h) = 0$ for all $v_h \in \mathcal{N}_h^0$, stability $\|R_h u\|_{H(\curl;\Omega)} \le \|u\|_{H(\curl;\Omega)}$, and $\|u-R_hu\|_{H_0(\curl;\Omega)} \rightarrow 0$ as $h \rightarrow 0$. See Proposition~\ref{prop:Riesz-Hcurl} for a proof.

\subsection{Semi-discrete FE scheme}

\paragraph{Discrete unknowns/test spaces.}
We approximate
\[
E_h(t)\in \mathcal N_h^0,\qquad B_h(t)\in \mathcal{RT}_h,
\]
and for all times test Amp\`ere with $\psi_h\in \mathcal N_h^0$ and Faraday with $\phi_h\in \mathcal{RT}_h$.

\paragraph{Semi-discrete in space scheme.}
Find $(E_h(t),B_h(t)) \in \mathcal{N}_h^0 \times \RT_h$ such that for a.e.\ $t\in(0,T)$
\begin{subequations}\label{eq:SD}
\begin{align}
(\varepsilon \partial_t E_h(t),\psi_h) + (\sigma E_h(t),\psi_h) - (\mu^{-1}B_h(t),\curl\psi_h) &= (f(t),\psi_h)
&&\forall \psi_h\in \mathcal N_h^0, \label{eq:SD-amp}
\\
(\partial_t B_h(t),\phi_h) + (\curl E_h(t),\phi_h) &= 0
&&\forall \phi_h\in \mathcal{RT}_h. \label{eq:SD-far}
\end{align}
\end{subequations}
Initial data are chosen as 
\[
E_h(0)=Q_h^N E_0 \in \mathcal N_h^0,\qquad
\text{$B_h(0)\in \mathcal{RT}_h$ \quad with \quad }(\divv B_h(0),q_h)=0\ \forall q_h\in \mathcal Q_h,
\]
Here, $ Q_h^N: L^2(\Omega)^3\to \mathcal N_h^0$ is the $L^2$-orthogonal projector with $\|E_h(0)-E_0\|_{L^2(\Omega)^3} \;\xrightarrow[h\to0]{}\; 0$ and $\|B_h(0)-B_0\|_{L^2(\Omega)^3} \;\xrightarrow[h\to0]{}\; 0$. An example of such approximation of $B_h(0)$ is the $L^2$-orthogonal projection of $B_0$ onto the discrete divergence-free subspace 
\(Z_h := \{v_h\in\mathcal{RT}_h:\ \divv v_h=0\ \text{in }\mathcal Q_h\}\). See Proposition~\ref{prop:constrained-proj}. 

\begin{remark}[Well-posedness of \eqref{eq:SD}]
Fixing $h$, \eqref{eq:SD} is a linear ODE in finite dimensions with a positive definite (block) mass matrix; thus it has a unique solution with $E_h\in H^1(0,T;\mathcal N_h^0)$,\qquad $B_h\in H^1(0,T;\mathcal{RT}_h)$. 
\end{remark}

\begin{lemma}[Discrete Gauss law preservation]
\label{lem:disc-gauss-variational}
Let $(E_h,B_h)$ solve the semi-discrete \eqref{eq:SD-far}. 
Assume the compatible N\'ed\'elec/RT pair so that $\curl\Nc_h^0\subset \RT_h$ and $\divv:\RT_h\to \mathcal{Q}_h$ is the usual (onto) divergence operator. Then, for every $q_h\in\mathcal{Q}_h$,
\[
(\divv B_h(t),q_h)=(\divv B_h(0),q_h)\qquad\text{for a.e.\ }t\in[0,T).
\]
In particular, if $(\divv B_h(0),q_h)=0$ for all $q_h\in \mathcal{Q}_h$, then $(\divv B_h(t),q_h)=0$ for all $q_h\in \mathcal{Q}_h$ and a.e.\ $t$.
\end{lemma}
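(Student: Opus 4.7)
The plan is to exploit the compatibility $\curl\Nc_h^0\subset \RT_h$ to upgrade the variational Faraday law to a \emph{pointwise} $L^2$ identity, and then differentiate the Gauss pairing in time, using $\divv\circ\curl=0$ distributionally.

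First I would observe that for any $t$, both $\partial_t B_h(t)$ and $\curl E_h(t)$ lie in $\RT_h$: the former because $B_h\in H^1(0,T;\RT_h)$, and the latter precisely because of the discrete de Rham inclusion $\curl\Nc_h^0\subset\RT_h$. Thus their sum $\partial_t B_h+\curl E_h$ is an element of $\RT_h$, and the Faraday equation \eqref{eq:SD-far} asserts it is $L^2$-orthogonal to every $\phi_h\in\RT_h$. Taking $\phi_h=\partial_t B_h+\curl E_h\in\RT_h$ itself yields
\[
\partial_t B_h(t)+\curl E_h(t)=0\qquad\text{in }L^2(\Omega)^3,\ \text{a.e.\ }t\in(0,T).
\]

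Next I would apply $\divv$ (in the distributional sense on $\Omega$) to this pointwise identity. Because $\divv\circ\curl=0$ as a distributional statement (for any $v\in L^2(\Omega)^3$ with $\curl v\in L^2$, and in particular for $E_h(t)\in\Nc_h^0$, since for every $\varphi\in C_c^\infty(\Omega)$ one has $\langle\divv\curl E_h,\varphi\rangle=-(\curl E_h,\nabla\varphi)_{L^2}=-(E_h,\curl\nabla\varphi)_{L^2}=0$), it follows that $\divv\partial_t B_h(t)=0$ as a distribution on $\Omega$, and thus a fortiori
\[
(\divv\partial_t B_h(t),\,q_h)=0\qquad\forall q_h\in \mathcal Q_h,\ \text{a.e.\ }t.
\]
Here I use that $q_h\in L^2(\Omega)$ and the pairing is the $L^2$ pairing; since $\partial_t B_h\in\RT_h\subset H(\divv;\Omega)$, $\divv\partial_t B_h\in L^2(\Omega)$ is a legitimate $L^2$ object.

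Finally, since $B_h\in H^1(0,T;\RT_h)$ and $\divv:\RT_h\to\mathcal Q_h$ is linear and bounded on a finite-dimensional space, $\divv B_h\in H^1(0,T;\mathcal Q_h)$ with $\partial_t(\divv B_h)=\divv(\partial_t B_h)$. Therefore, for each fixed $q_h\in\mathcal Q_h$, the scalar function $t\mapsto(\divv B_h(t),q_h)$ is absolutely continuous with a.e.\ vanishing derivative, hence constant, giving $(\divv B_h(t),q_h)=(\divv B_h(0),q_h)$ for a.e.\ $t\in[0,T)$. The stated corollary for $(\divv B_h(0),q_h)=0$ is then immediate.

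There is no real obstacle here; the only subtle point is justifying the upgrade from the variational Faraday identity to the pointwise equality $\partial_t B_h+\curl E_h=0$, which rests crucially on the \emph{compatibility} $\curl\Nc_h^0\subset\RT_h$ afforded by the discrete de Rham sequence. Without this inclusion, $\curl E_h$ would not be testable against itself inside $\RT_h$ and the argument would collapse to the weak statement only.
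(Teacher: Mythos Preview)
Your proof is correct and follows essentially the same approach as the paper: upgrade the variational Faraday law to the pointwise identity $\partial_t B_h+\curl E_h=0$ in $L^2$ using $\curl\Nc_h^0\subset\RT_h$, then apply $\divv$ and use $\divv\circ\curl=0$ to conclude that $t\mapsto(\divv B_h(t),q_h)$ has a.e.\ vanishing derivative. The only cosmetic difference is that the paper phrases Step~1 via $r_h\in\RT_h\cap\RT_h^\perp=\{0\}$ and invokes $\divv(\curl E_h)=0$ elementwise rather than distributionally, but the substance is identical.
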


\begin{proof}
\emph{Step 1: Define the residual and show it vanishes.}
For a.e.\ $t$, define
\[
r_h(t):= \partial_t B_h(t)+\curl E_h(t).
\]
By construction, $B_h(t)\in\mathcal{RT}_h$ for all $t$ and the coefficient vector of $B_h$ is absolutely continuous in time. 
Indeed, by well-posedness of the semi-discrete system we have $B_h\in H^1(0,T;\mathcal{RT}_h)$, for example, write $B_h(t)=\sum_{j=1}^{N_B} b_j(t)\,\rho_j$ with $b_j\in H^1(0,T)$ and
$\{\rho_j\}$ a basis of $\mathcal{RT}_h$. 
Hence $\partial_t B_h(t)\in\mathcal{RT}_h$ for a.e.\ $t$. By discrete exactness, $\curl E_h(t)\in\mathcal{RT}_h$. Therefore $r_h(t)\in\mathcal{RT}_h$ for a.e.\ $t$.

From \eqref{eq:SD-far} we have
\[
(r_h(t),\phi_h)=0\qquad \forall\,\phi_h\in\mathcal{RT}_h.
\]
Thus $r_h(t)\in \mathcal{RT}_h^\perp$ (orthogonal complement in $L^2(\Omega)^3$). Since also $r_h(t)\in\mathcal{RT}_h$, we have
\[
r_h(t)\in \mathcal{RT}_h\cap \mathcal{RT}_h^\perp=\{0\}.
\]
Hence
\[
\partial_t B_h(t)+\curl E_h(t)=0\quad\text{in }L^2(\Omega)^3,\ \text{for a.e.\ }t.
\]

\emph{Step 2: Apply $\divv:\mathcal{RT}_h\to\mathcal{Q}_h$.}
Because $\divv$ maps $\mathcal{RT}_h$ into $\mathcal{Q}_h$ and is linear/continuous on $\mathcal{RT}_h$, we may apply $\divv$ to the identity above (in the sense of $\mathcal{Q}_h$):
\[
\partial_t(\divv B_h(t)) + \divv(\curl E_h(t)) \;=\; 0 \quad \text{in }\mathcal{Q}_h,\ \text{for a.e.\ }t.
\]
But $\divv(\curl E_h(t))\equiv 0$ elementwise (and hence in $\mathcal{Q}_h$). Therefore
\[
\partial_t(\divv B_h(t)) = 0 \quad \text{in }\mathcal{Q}_h,\ \text{for a.e.\ }t.
\]

\emph{Step 3: Pair with arbitrary $q_h\in\mathcal{Q}_h$.}
For any fixed $q_h\in\mathcal{Q}_h$, the scalar function
\[
g_{q_h}(t):=(\divv B_h(t),q_h)
\]
is absolutely continuous in $t$. Indeed, as in Step 1, above we have $B_h\in H^1(0,T;\mathcal{RT}_h)$. 
Since $\divv:\mathcal{RT}_h\to\mathcal{Q}_h$ is bounded linear and $\mathcal{RT}_h$ is finite
dimensional, write $B_h(t)=\sum_{j=1}^{N_B} b_j(t)\,\rho_j$ with $b_j\in H^1(0,T)$ and
$\{\rho_j\}$ a basis of $\mathcal{RT}_h$. Then
\[
\divv B_h(t)=\sum_{j=1}^{N_B} b_j(t)\,\divv\rho_j \in \mathcal{Q}_h,\qquad
\partial_t(\divv B_h)(t)=\sum_{j=1}^{N_B} b_j'(t)\,\divv\rho_j
= \divv\big(\partial_t B_h(t)\big)
\]
for a.e.\ $t\in(0,T)$. Hence $\divv B_h\in H^1(0,T;\mathcal{Q}_h)$ and
$\partial_t(\divv B_h)=\divv(\partial_t B_h)$ a.e.\ on $(0,T)$. It then follows that 
\[
g_{q_h}(t)=(\divv B_h(t),q_h)\in H^1(0,T)\subset\mathrm{AC}([0,T]),
\]
with, for a.e.\ $t\in(0,T)$,
\[
g_{q_h}'(t)=(\partial_t(\divv B_h(t)),q_h)=(\divv(\partial_t B_h(t)),q_h).
\]
Using the identity $\partial_t B_h+\curl E_h=0$ in $\mathcal{RT}_h$ and
$\divv\circ\curl\equiv 0$, we conclude $g_{q_h}'(t)=0$ a.e.\ on $[0,T)$. Hence
\[
(\divv B_h(t),q_h)=(\divv B_h(0),q_h)\qquad\forall\,q_h\in\mathcal{Q}_h,\ \text{for a.e.\ }t\in[0,T).
\]
This completes the proof.
\end{proof}

\begin{lemma}[Discrete energy identity and bounds]\label{lem:energy}
Define the discrete energy
\[
\mathcal E_h(t):=\tfrac12 \left(\|\sqrt{\varepsilon}E_h(t)\|_{L^2}^2+\|\sqrt{\mu^{-1}}B_h(t)\|_{L^2}^2 \right).
\]
Then for a.e.\ $t$,
\[
\frac{d}{dt}\mathcal E_h(t) + \|\sqrt{\sigma}E_h(t)\|_{L^2}^2 = (f(t),E_h(t)).
\]
Consequently, for any $T>0$,
\[
\sup_{0\le t\le T}\mathcal E_h(t)
\;\le\; e^{T}\mathcal E_h(0)+\frac{e^{T}}{2\varepsilon_{\min}}\int_0^T\|f(s)\|_{L^2}^2\,ds,
\]
and hence
\[
\|E_h\|_{L^\infty(0,T;L^2(\Omega)^3)}+\|B_h\|_{L^\infty(0,T;L^2(\Omega)^3)}\le 
C_T \left( \|f\|_{L^2(0,T;L^2(\Omega)^3)} + \|E_0\|_{L^2(\Omega)^3} + \|B_0\|_{L^2(\Omega)}  \right) ,
\]
with $C_T$ independent of $h$.
\end{lemma}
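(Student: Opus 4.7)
The strategy is the standard ``multiply-and-add'' energy argument, with one structural subtlety: since $\mu^{-1}$ is only an $L^\infty$ tensor, $\mu^{-1}B_h(t)$ need not lie in $\RT_h$, and therefore cannot be inserted directly as a test function in the variational Faraday law \eqref{eq:SD-far}. The \emph{main obstacle}, and the only nontrivial point in the whole argument, is to bypass this. I would invoke the strong pointwise identity already established inside Step~1 of the proof of Lemma~\ref{lem:disc-gauss-variational}: exactness of the discrete de~Rham sequence, $\curl \Nc_h^0\subset \RT_h$, forces the residual $\partial_t B_h+\curl E_h$ to belong to $\RT_h\cap \RT_h^\perp=\{0\}$, so
\[
\partial_t B_h(t)+\curl E_h(t)=0 \qquad\text{in }L^2(\Omega)^3,\ \text{for a.e.\ }t\in(0,T).
\]
This strong identity admits \emph{any} $L^2(\Omega)^3$ function, in particular $\mu^{-1}B_h(t)$, as a pointwise-in-time test.

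\textbf{Step 1 (energy identity).} I would pair the strong Faraday identity with $\mu^{-1}B_h(t)\in L^2$, use symmetry of $\mu^{-1}$ and the chain rule $\tfrac{d}{dt}\tfrac12(\mu^{-1}u,u)=(\mu^{-1}\partial_t u,u)$ (legal since $B_h\in H^1(0,T;\RT_h)$), to obtain $\tfrac12\tfrac{d}{dt}\|\sqrt{\mu^{-1}}\,B_h\|_{L^2}^2+(\mu^{-1}B_h,\curl E_h)=0$. Next I would test Amp\`ere \eqref{eq:SD-amp} with $\psi_h=E_h(t)\in \Nc_h^0$, yielding $\tfrac12\tfrac{d}{dt}\|\sqrt\varepsilon\,E_h\|_{L^2}^2+\|\sqrt\sigma\,E_h\|_{L^2}^2-(\mu^{-1}B_h,\curl E_h)=(f,E_h)$. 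Adding the two relations cancels the curl coupling and produces the claimed pointwise identity $\tfrac{d}{dt}\mathcal E_h(t)+\|\sqrt\sigma\,E_h(t)\|_{L^2}^2=(f(t),E_h(t))$ for a.e.\ $t$.

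\textbf{Step 2 (Gr\"onwall bound).} Cauchy--Schwarz and Young, together with the uniform ellipticity bound $\|E_h\|_{L^2}^2\le \varepsilon_{\min}^{-1}\|\sqrt\varepsilon\,E_h\|_{L^2}^2\le 2\varepsilon_{\min}^{-1}\,\mathcal E_h(t)$, give $(f,E_h)\le \tfrac{1}{2\varepsilon_{\min}}\|f\|_{L^2}^2+\mathcal E_h(t)$. Dropping the nonnegative dissipation $\|\sqrt\sigma\,E_h\|_{L^2}^2$ yields the differential inequality $\tfrac{d}{dt}\mathcal E_h\le \mathcal E_h+\tfrac{1}{2\varepsilon_{\min}}\|f\|_{L^2}^2$, and the differential form of Gr\"onwall on $[0,T]$ delivers the stated supremum estimate $\sup_t\mathcal E_h(t)\le e^T\mathcal E_h(0)+\tfrac{e^T}{2\varepsilon_{\min}}\int_0^T\|f\|_{L^2}^2$.

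\textbf{Step 3 (conversion to $L^\infty L^2$).} Coercivity of $\varepsilon$ and $\mu^{-1}$ yields $\|E_h(t)\|_{L^2}^2+\|B_h(t)\|_{L^2}^2\lesssim \mathcal E_h(t)$ with constants depending only on the $L^\infty$ bounds and ellipticity constants of $\varepsilon,\mu$. For the initial energy, $L^2$-stability of the projector $E_h(0)=Q_h^N E_0$ and of the chosen $\RT_h$-reconstruction of $B_h(0)$ (e.g., the constrained $L^2$ projector of Proposition~\ref{prop:constrained-proj}, whose operator norm is $1$) gives $\mathcal E_h(0)\lesssim \|E_0\|_{L^2}^2+\|B_0\|_{L^2}^2$. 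Assembling these ingredients produces the final $h$-independent bound with $C_T$ depending only on $T$ and the $L^\infty$/ellipticity data of $\varepsilon,\mu$.
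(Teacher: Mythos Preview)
Your proof is correct and rests on the same structural observation as the paper---namely that $\partial_t B_h$ and $\curl E_h$ both live in $\RT_h$---but you package it differently. The paper tests the variational Faraday law \eqref{eq:SD-far} with $\phi_h=P_h(\mu^{-1}B_h)\in\RT_h$ and then removes $P_h$ using $(\partial_t B_h,P_h v)=(\partial_t B_h,v)$ and $(\curl E_h,P_h v)=(\curl E_h,v)$ for any $v\in L^2$, since both left entries are already in $\RT_h$. You instead first promote \eqref{eq:SD-far} to the strong $L^2$ identity $\partial_t B_h+\curl E_h=0$ (as in Step~1 of Lemma~\ref{lem:disc-gauss-variational}) and then pair with $\mu^{-1}B_h$. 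These are two equivalent uses of the same fact $\RT_h\cap\RT_h^\perp=\{0\}$; the paper's projector route is a one-liner and avoids referring to another lemma, while your route makes the ``any $L^2$ test is admissible'' point more explicit. Steps~2 and~3 match the paper's argument essentially verbatim.
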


\begin{proof}
Set $\psi_h=E_h$ in \eqref{eq:SD-amp} and $\phi_h=P_h(\mu^{-1}B_h)$ (recall that $P_h : L^2(\Omega)^3 \rightarrow \RT_h$, see \eqref{eq:projection}) in \eqref{eq:SD-far}, we 
obtain that 
\begin{subequations}\label{eq:SD00}
\begin{align}
(\varepsilon \partial_t E_h,E_h) + (\sigma E_h,E_h) - (\mu^{-1}B_h,\curl E_h) &= (f,E_h) , \label{eq:SD-amp0}
\\
(\partial_t B_h,P_h(\mu^{-1}B_h)) + (\curl E_h,P_h(\mu^{-1}B_h)) &= 0. \label{eq:SD-far0}
\end{align}
\end{subequations}
Notice that $\partial_t B_h \in \RT_h$ and $\curl E_h \in \RT_h$, therefore \eqref{eq:SD-far0} is equivalent to 
\[
(\partial_t B_h,\mu^{-1}B_h) + (\curl E_h,\mu^{-1}B_h) = 0. 
\]
Adding the resulting Faraday and Amp\'ere's laws, we obtain that 
\[
(\varepsilon \partial_t E_h,E_h)+(\sigma E_h,E_h)+(\partial_t B_h,\mu^{-1}B_h)=(f,E_h).
\]
Recognize time derivatives of the quadratic terms to obtain the identity. Apply Young's inequality:
\(
(f,E_h)\le \tfrac{1}{2\varepsilon_{0}}\|f\|^2+\tfrac12 (\varepsilon E_h,E_h)
\),
drop the nonnegative damping term, and apply Gr\"onwall to the inequality
\(\mathcal E_h'(t)\le \mathcal E_h(t)+\frac{1}{2\varepsilon_{0}}\|f\|^2_{L^2(0,T;L^2(\Omega)}\) to obtain the result.
\end{proof}
The above result implies that (up to subsequences)
\begin{equation}\label{eq:hcompact}
\begin{aligned}
E_h &\rightharpoonup E \quad \text{weakly-* in } L^\infty(0,T;L^2(\Omega)^3), \\
B_h &\rightharpoonup B \quad \text{weakly-* in } L^\infty(0,T;L^2(\Omega)^3) .
\end{aligned}
\end{equation}
These limits will be sufficient to pass through the limit in \eqref{eq:SD}, except the time-derivative terms. In the next result, we use these limits to show the convergence of $\divv B_h$. Subsequently, we will analyze the convergence of $\partial_t E_h$ and $\partial_t B_h$. 

\begin{lemma}[Strong convergence of $\divv B_h$ under exact discrete solenoidality]
\label{lem:divBh-strong-zero-upd}
Let the assumptions of Lemma~\ref{lem:disc-gauss-variational} hold. 
If moreover $(\divv B_h(0),q_h)=0$ for all $q_h\in\mathcal Q_h$, then 
\[
\divv B_h \equiv 0 \quad \text{in } L^\infty(0,T;L^2(\Omega)) . 
\]
In particular, $\divv B_h\to 0$ strongly in $L^\infty(0,T;L^2(\Omega))$. 
Furthermore, if $B_h\rightharpoonup B$ weak-* in $L^\infty(0,T;L^2(\Omega)^3)$, then
\[
\divv B_h \rightharpoonup \divv B \quad\text{in }
L^\infty(0,T;L^2(\Omega)), 
\]
and by uniqueness of the weak limit we conclude $\divv B=0$ in 
$L^\infty(0,T;L^2(\Omega))$. 
\end{lemma}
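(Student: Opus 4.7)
The plan is to exploit the preserved discrete Gauss law together with the discrete exactness $\divv\mathcal{RT}_h\subset\mathcal Q_h$ of the N\'ed\'elec/RT de Rham complex. From Lemma~\ref{lem:disc-gauss-variational} combined with the hypothesis $(\divv B_h(0),q_h)=0$ for all $q_h\in\mathcal Q_h$, I would first record that $(\divv B_h(t),q_h)=0$ for all $q_h\in\mathcal Q_h$ and a.e.\ $t\in[0,T)$. The crucial observation is that, since $B_h(t)\in\mathcal{RT}_h$ and discrete exactness places $\divv B_h(t)$ in $\mathcal Q_h$, the quantity $\divv B_h(t)$ is itself an admissible test function.

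I would then substitute $q_h:=\divv B_h(t)\in\mathcal Q_h$ into the identity above, obtaining $\|\divv B_h(t)\|_{L^2(\Omega)}^2=0$ for a.e.\ $t$, hence $\divv B_h\equiv 0$ in $L^\infty(0,T;L^2(\Omega))$. Strong convergence to zero in $L^\infty(0,T;L^2(\Omega))$ is then immediate. For the identification of the limit with $\divv B$, I would pass through the distributional divergence: for every $\psi\in C_c^\infty(\Omega\times(0,T))$, integration by parts in space gives
\[
0 \;=\; \int_0^T (\divv B_h(t),\psi(t))\,dt \;=\; -\int_0^T (B_h(t),\nabla\psi(t))\,dt.
\]
Since $\nabla\psi\in L^1(0,T;L^2(\Omega)^3)$ and $B_h\rightharpoonup B$ weak-* in $L^\infty(0,T;L^2(\Omega)^3)$, the right-hand side tends to $-\int_0^T(B(t),\nabla\psi(t))\,dt$, which is precisely the distributional action of $\divv B$ against $\psi$. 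Therefore $\divv B=0$ in $\mathcal D'(\Omega\times(0,T))$, and in particular in $L^\infty(0,T;L^2(\Omega))$, which is the claimed weak-* limit identification.

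There is no substantive technical obstacle; the argument is a short structural consequence of the de Rham exactness combined with the Gauss-law preservation already proved. The only mild point requiring care is the measurability and time-integrability of $t\mapsto\divv B_h(t)$ needed to test pointwise in $t$: this follows from $B_h\in H^1(0,T;\mathcal{RT}_h)$, established in the well-posedness of \eqref{eq:SD}, together with the boundedness of $\divv:\mathcal{RT}_h\to\mathcal Q_h$ in the finite-dimensional setting, exactly as invoked in the proof of Lemma~\ref{lem:disc-gauss-variational}.
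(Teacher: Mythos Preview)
Your proposal is correct and matches the paper's proof essentially line for line: both invoke Lemma~\ref{lem:disc-gauss-variational}, test with $q_h=\divv B_h(t)\in\mathcal Q_h$ to force $\divv B_h\equiv 0$, and then identify $\divv B=0$ by integrating by parts against smooth test functions and passing to the weak-* limit. The only cosmetic difference is your choice of test class $C_c^\infty(\Omega\times(0,T))$ versus the paper's $L^1(0,T;C_c^\infty(\Omega))$, which is immaterial.
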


\begin{proof}
For each $t$, since $B_h(t)\in\mathcal{RT}_h$ we have $\divv B_h(t)\in\mathcal Q_h$. By the discrete Gauss law and the divergence-free initialization (cf.~Lemma~\ref{lem:disc-gauss-variational}),
\[
(\divv B_h(t),q_h)=0\qquad\forall\,q_h\in\mathcal Q_h.
\]
Choosing $q_h=\divv B_h(t)$ (admissible because $\divv B_h(t)\in\mathcal Q_h$) yields
$\|\divv B_h(t)\|_{L^2(\Omega)}^2=0$, i.e.\ $\divv B_h(t)\equiv 0$ in $L^2(\Omega)$ for a.e.\ $t$.
Hence $\|\divv B_h\|_{L^\infty(0,T;L^2(\Omega))}=0$, so $\divv B_h\to 0$ strongly in $L^\infty(0,T;L^2(\Omega))$. 

For the identification of the limit, we let $\varphi \in L^1(0,T;C_c^\infty(\Omega))$ and recall that $B_h \rightharpoonup B$ weak-* in $L^\infty(0,T;L^2(\Omega)^3)$, then we have that
\[
\int_0^T (\divv B_h, \varphi) dt 
= -\int_0^T (B_h, \nabla \varphi) dt
\rightarrow -\int_0^T (B, \nabla \varphi) dt = 0 
\]
where in the last equality we have use the uniqueness of limit. From the definition of weak-derivative the required result follows.
\end{proof}

\begin{lemma}[Convergence of $E_h(0)$ in $L^2$]
\label{lem:simple-E0}
Let $E_0\in L^2(\Omega)^3$. With $E_h(0)=Q_h^{\mathrm N}E_0$,
\[
\|E_h(0)-E_0\|_{L^2(\Omega)^3} \xrightarrow[h\to0]{} 0.
\]
\end{lemma}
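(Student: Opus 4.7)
The plan is to exploit the optimality of the $L^2$-orthogonal projection combined with a density argument on $C_c^\infty(\Omega)^3$. Since $Q_h^{\mathrm{N}}$ is the orthogonal projector onto the closed subspace $\mathcal N_h^0 \subset L^2(\Omega)^3$, it satisfies the best-approximation property
\[
\|E_h(0)-E_0\|_{L^2(\Omega)^3}
\;=\;\min_{v_h\in\mathcal N_h^0}\|v_h-E_0\|_{L^2(\Omega)^3}.
\]
Hence it suffices to exhibit, for every $\eta>0$, an element $v_h\in\mathcal N_h^0$ with $\|v_h-E_0\|_{L^2}<\eta$ for all sufficiently small $h$.

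First, given $\eta>0$, I would use density of $C_c^\infty(\Omega)^3$ in $L^2(\Omega)^3$ to pick $\varphi\in C_c^\infty(\Omega)^3$ with $\|E_0-\varphi\|_{L^2(\Omega)^3}<\eta/2$. Set $K:=\mathrm{supp}\,\varphi\Subset\Omega$ and $d:=\mathrm{dist}(K,\partial\Omega)>0$. Next, for any $h$ with (say) $h<d/2$, the canonical N\'ed\'elec interpolant $\Pi_h^{\mathcal N}\varphi$ is well-defined because $\varphi$ is smooth enough for all edge/face moments to make sense, and crucially, on every element $K'\in\mathcal T_h$ that touches $\partial\Omega$ the function $\varphi$ and its tangential components vanish identically, so all N\'ed\'elec degrees of freedom on boundary edges/faces are zero. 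Thus $\Pi_h^{\mathcal N}\varphi\in\mathcal N_h^0$, and standard N\'ed\'elec interpolation estimates give
\[
\|\varphi-\Pi_h^{\mathcal N}\varphi\|_{L^2(\Omega)^3}\;\le\;C\,h\,\|\varphi\|_{H^1(\Omega)^3}\;\xrightarrow[h\to0]{}\;0.
\]

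Finally, combining the best-approximation bound with $v_h=\Pi_h^{\mathcal N}\varphi$ and the triangle inequality,
\[
\|E_h(0)-E_0\|_{L^2(\Omega)^3}
\;\le\;\|E_0-\Pi_h^{\mathcal N}\varphi\|_{L^2(\Omega)^3}
\;\le\;\|E_0-\varphi\|_{L^2(\Omega)^3}+\|\varphi-\Pi_h^{\mathcal N}\varphi\|_{L^2(\Omega)^3}\;<\;\eta
\]
for all $h$ small enough. Since $\eta>0$ was arbitrary, this proves the claim.

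The only genuinely delicate point is ensuring the chosen approximant lies in $\mathcal N_h^0$ (the zero tangential trace subspace), rather than merely in $\mathcal N_h$. The compact-support trick sidesteps the need for any trace-correction or Scott--Zhang style boundary fix: once $h<d$, boundary degrees of freedom of $\Pi_h^{\mathcal N}\varphi$ vanish automatically. Everything else reduces to the standard density/optimality template.
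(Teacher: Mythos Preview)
Your proof is correct and follows essentially the same approach as the paper: both use the best-approximation property of the $L^2$-orthogonal projector together with density of $\bigcup_h\mathcal N_h^0$ in $L^2(\Omega)^3$. The paper simply asserts this density, whereas you supply the details via the compact-support plus N\'ed\'elec-interpolation argument, which is exactly the standard way to justify the claim.
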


\begin{proof}
Since $Q_h^{\mathrm N}$ is the $L^2$-orthogonal projector, 
\[
\|E_0-Q_h^{\mathrm N}E_0\|_{L^2(\Omega)^3}=\min_{v_h\in\mathcal N_h^0}\|E_0-v_h\|_{L^2(\Omega)^3}.
\]
The union $\bigcup_h\mathcal N_h^0$ is dense in $L^2(\Omega)^3$, 
hence the best-approximation error vanishes as $h\to0$. Thus $\|E_h(0)-E_0\|_{L^2(\Omega)^3}\to0$.
\end{proof}

\begin{lemma}[Convergence of $B_h(0)$ in $L^2$]
\label{lem:simple-B0} 
Let $B_0\in L^2(\Omega)^3$ with $\divv B_0=0$ in $L^2(\Omega)$.
Assume $\Omega$ is simply connected with connected boundary so that there exists
$A_0\in H_0(\curl;\Omega)$ with $\curl A_0=B_0$.

\smallskip
\noindent\emph{(i) Potential-based initialization.} If $B_h(0):=\curl(R_h A_0)$, then
\[
\|B_h(0)-B_0\|_{L^2(\Omega)^3} \;\le\; \|R_hA_0-A_0\|_{H(\curl;\Omega)} \xrightarrow[h\to0]{} 0.
\]

\smallskip
\noindent\emph{(ii) Constrained $L^2$-projection.} Let $Z_h:=\{v_h\in\RT_h^0:\divv v_h=0\text{ in } \mathcal{Q}_h\}$ and define $B_h(0)\in Z_h$ as the $L^2$-orthogonal projection of $B_0$ onto $Z_h$.
Then
\[
\|B_h(0)-B_0\|_{L^2(\Omega)^3} \xrightarrow[h\to0]{} 0.
\]
\end{lemma}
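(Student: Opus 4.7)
For part (i), the plan is essentially a one-line calculation followed by an appeal to the Riesz-projection convergence. First I would use $B_0=\curl A_0$ and linearity of $\curl$ to write
\[
B_h(0)-B_0 \;=\; \curl(R_h A_0)-\curl A_0 \;=\; \curl(R_h A_0-A_0),
\]
and then invoke the trivial bound $\|\curl w\|_{L^2(\Omega)^3}\le \|w\|_{H(\curl;\Omega)}$ to obtain the stated inequality. Convergence to zero is exactly the content of Proposition~\ref{prop:Riesz-Hcurl} applied to $A_0$. The only nontrivial ingredient is the existence of $A_0\in H_0(\curl;\Omega)$ with $\curl A_0=B_0$, which is the classical divergence-free vector-potential result on a simply connected Lipschitz domain with connected boundary (cf.\ \cite{VGirault_PARaviart_1986a}); this is precisely the place in the semi-discrete section where the topological hypothesis on $\Omega$ is used.

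For part (ii), the plan is to reduce to (i) by exhibiting an explicit element of $Z_h$ that approximates $B_0$. Take $w_h := \curl(R_h A_0)$ as a comparison element. I would verify $w_h\in Z_h$ in three short steps: (a) $w_h\in\RT_h$ by the discrete de Rham inclusion $\curl\mathcal{N}_h^0\subset\RT_h$; (b) since $R_h A_0\in\mathcal{N}_h^0$ satisfies $R_h A_0\times\nu=0$, a standard $H(\curl)$--$H(\divv)$ integration-by-parts gives $\curl(R_hA_0)\cdot\nu=0$ on $\partial\Omega$, so $w_h$ meets the normal-trace constraint built into $\RT_h^0$; (c) $\divv w_h\equiv 0$ pointwise since $\divv\circ\curl\equiv 0$, hence $(\divv w_h,q_h)=0$ for all $q_h\in\mathcal{Q}_h$. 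Since $B_h(0)$ is, by Proposition~\ref{prop:constrained-proj}, the $L^2$-orthogonal projection of $B_0$ onto $Z_h$, the best-approximation property yields
\[
\|B_h(0)-B_0\|_{L^2(\Omega)^3}\;\le\;\|w_h-B_0\|_{L^2(\Omega)^3},
\]
and the right-hand side tends to zero by part (i).

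The main obstacle is conceptual rather than technical: I need to check that the potential-based candidate $w_h$ actually belongs to the \emph{constrained} space $Z_h$, which bundles a conforming membership, a normal-trace boundary condition, and discrete solenoidality. Once that inclusion is secured via the de Rham exactness and the tangential-trace-to-normal-trace integration-by-parts identity, both parts of the lemma collapse to linear consequences of the Riesz-projection convergence from Proposition~\ref{prop:Riesz-Hcurl}. In particular, no direct density argument for $\bigcup_h Z_h$ inside $\{v\in L^2(\Omega)^3:\divv v=0\}$ is required, since the explicit construction $w_h=\curl(R_h A_0)$ sidesteps it.
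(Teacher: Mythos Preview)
Your proposal is correct and follows essentially the same approach as the paper: for (i) you both write $B_h(0)-B_0=\curl(R_hA_0-A_0)$ and bound by the $H(\curl)$-norm, and for (ii) you both use the comparison element $w_h=\curl(R_hA_0)\in Z_h$ together with the best-approximation property of the orthogonal projection. Your explicit verification that $w_h$ satisfies the normal-trace constraint in $\RT_h^0$ (step (b)) is a welcome detail that the paper's proof leaves implicit in the inclusion $\curl\mathcal N_h^0\subset Z_h$.
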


\begin{proof}
(i) By definition of $B_h(0)$ and $\curl A_0=B_0$,
\[
\|B_h(0)-B_0\|_{L^2(\Omega)^3}=\|\curl(R_hA_0-A_0)\|_{L^2(\Omega)^3}\le \|R_hA_0-A_0\|_{H(\curl;\Omega)}\to0,
\]
using the $H(\curl)$-convergence of $R_h$. 

(ii) Since $B_h(0)$ is the $L^2$-orthogonal projection onto $Z_h$,
\[
\|B_0-B_h(0)\|_{L^2(\Omega)^3}=\min_{z_h\in Z_h}\|B_0-z_h\|_{L^2(\Omega)^3}\le \inf_{w_h\in\curl\Nc_h^0}\|B_0-w_h\|_{L^2(\Omega)^3}.
\]
With $B_0=\curl A_0$ and $w_h=\curl(R_h A_0)\in\curl\Nc_h^0\subset Z_h$, the right-hand side tends to $0$ by part (i). The proof is complete.
\end{proof}

\begin{theorem}[Convergence]\label{lem:RTNconvergence}
Let $E_h\in H^1(0,T;\mathcal N_h^0)$, $B_h\in H^1(0,T;\mathcal{RT}_h)$ solve \eqref{eq:SD}. Then as $h \rightarrow 0$, we have \eqref{eq:hcompact} 
where 
$E,B$ satisfy Definition~\ref{def:weak_soln} (in particular \eqref{eq:MaxWeak}) with $E(0)=E_0$, $B(0)=B_0$, and $\divv B(t)=0$ in $L^2(\Omega)$ a.e. $t \in (0,T)$.
\end{theorem}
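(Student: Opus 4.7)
The argument parallels Step~3 of Theorem~\ref{thm:main}, with the spectral Galerkin basis replaced by the N\'ed\'elec/RT projectors $R_h$ and $P_h$, and with test functions of the form $\eta(t)\,w$, $\eta\in C_c^\infty(0,T)$, used to identify distributional time derivatives in the appropriate dual spaces. Lemma~\ref{lem:energy} already supplies the uniform bound $\|E_h\|_{L^\infty(0,T;L^2)}+\|B_h\|_{L^\infty(0,T;L^2)}\le C$, so Banach--Alaoglu applied in the predual $L^1(0,T;L^2(\Omega)^3)$ yields the weak-$*$ convergences \eqref{eq:hcompact} along a (not relabeled) subsequence, with limits $E,B\in L^\infty(0,T;L^2(\Omega)^3)$.

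\textbf{Passing to the limit in Faraday and Amp\`ere.} Fix $\phi\in H(\curl;\Omega)$ and $\eta\in C_c^\infty(0,T)$. Testing \eqref{eq:SD-far} with $\phi_h:=P_h\phi\in\mathcal{RT}_h$, using $\partial_t B_h,\curl E_h\in\mathcal{RT}_h$ to drop $P_h$ on both terms, integrating by parts in space (the tangential trace of $E_h$ vanishes because $E_h\in\mathcal N_h^0$) and in time gives
\[
-\int_0^T (B_h,\phi)\,\eta'(t)\,dt + \int_0^T (E_h,\curl\phi)\,\eta(t)\,dt = 0.
\]
The weak-$*$ convergences \eqref{eq:hcompact}, paired with $\phi\eta',(\curl\phi)\eta\in L^1(0,T;L^2(\Omega)^3)$, send this identity to the same relation with $(E,B)$ in place of $(E_h,B_h)$, and the bound $|(E(t),\curl\phi)|\le\|E(t)\|_{L^2}\|\phi\|_{H(\curl)}$ together with $E\in L^2(0,T;L^2)$ identifies $\partial_t B\in L^2(0,T;H(\curl;\Omega)^*)$ and the weak Faraday law $\langle\partial_t B,\phi\rangle+(E,\curl\phi)=0$. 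An identical argument for \eqref{eq:SD-amp} with $\psi_h:=R_h\psi$ for arbitrary $\psi\in H_0(\curl;\Omega)$ exploits the full $H(\curl)$-convergence $R_h\psi\to\psi$ (Proposition~\ref{prop:Riesz-Hcurl}) to close the $\curl$-coupling term $(\mu^{-1}B_h,\curl R_h\psi)$, and the $L^2$-convergence $R_h\psi\to\psi$ to pass in the mass, damping, and source terms, yielding the Amp\`ere identity and $\partial_t E\in L^2(0,T;H_0(\curl;\Omega)^*)$.

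\textbf{Initial conditions, Gauss law, and full convergence.} To recover $B(0)=B_0$ and $E(0)=E_0$ I would repeat the previous step with $\eta\in C^\infty([0,T])$ chosen so that $\eta(0)=1,\eta(T)=0$; the $t=0$ boundary terms produced by the integration by parts are $(B_h(0),\phi)$ and $(\varepsilon E_h(0),\psi)$ (after absorbing $P_h$ and $R_h$ into the adjoints), which converge in $L^2$ to $(B_0,\phi)$ and $(\varepsilon E_0,\psi)$ by Lemmas~\ref{lem:simple-B0} and~\ref{lem:simple-E0}. Comparing with the same integration by parts applied to the already-established variational identities in the limit, legitimate because $\varepsilon E\in H^1(0,T;H_0(\curl)^*)$ and $B\in H^1(0,T;H(\curl)^*)$ embed continuously into $C([0,T];\cdot)$ and are weakly $L^2$-continuous (by the argument of Corollary~\ref{cor:C0-in-time}), forces $B(0)=B_0$ and $E(0)=E_0$ in $L^2(\Omega)^3$. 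The divergence-free property $\divv B=0$ in $L^\infty(0,T;L^2(\Omega))$ is already established in Lemma~\ref{lem:divBh-strong-zero-upd}. Any subsequential limit therefore fits Definition~\ref{def:weak_soln}, so by uniqueness (Theorem~\ref{thm:uniq-cont}) it coincides with the unique weak solution, and a subsequence-of-subsequence argument upgrades the convergence to the full family $h\to 0$. The principal technical obstacle is exactly the passage in the time-derivative terms: with only weak-$*$ $L^\infty(0,T;L^2)$ compactness at hand and no Aubin--Lions-type strong-in-time compactness, the limits of $\partial_t E_h$ and $\partial_t B_h$ must be identified \emph{indirectly} through integration against $C_c^\infty(0,T)$, and it is precisely the $H(\curl)$/$L^2$ consistency of $R_h$ and $P_h$, together with the compatibility $\curl\mathcal N_h^0\subset\mathcal{RT}_h$, that makes the $\curl$-coupling terms close in the limit.
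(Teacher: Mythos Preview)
Your proposal is correct and follows essentially the same approach as the paper: test Faraday with $P_h\phi$ and Amp\`ere with $R_h\psi$, integrate by parts in time against $\eta\in C_c^\infty(0,T)$, pass to the limit using projector consistency together with the weak-$*$ convergences \eqref{eq:hcompact}, and identify the distributional time derivatives in the dual spaces. You are in fact slightly more thorough than the paper's own proof, which does not explicitly carry out the $\eta(0)=1,\ \eta(T)=0$ argument to recover $E(0)=E_0$ and $B(0)=B_0$.
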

\begin{proof}
Let $\Phi\in H(\curl;\Omega)$ be arbitrary. Define $\phi_h = P_h \Phi$ where $P_h$ as is as in \eqref{eq:projection}, then 
\[
\langle \partial_t B_h,\phi_h\rangle = -\,(\curl E_h,P_h \Phi)
= -\,(\curl E_h,\Phi)
= -\,(E_h,\curl \Phi) , 
\]
where in the second equality we have used that definition of $P_h$ and the fact that $\curl E_h \in \mathcal{RT}_h$ from the de Rham sequence. Moreover, in the last equality we have applied the integration-by-parts and have used the boundary conditions on $E_h$. Next, test the equation with $\eta \in C_c^\infty(0,T)$, after integration-by-parts in time, we obtain that 
\[
- \int_0^T ( B_h,\phi_h ) \eta'(t)\, dt
= -\int_0^T (E_h, \rm curl \Phi)\, dt , \quad \forall \Phi \in H(\curl;\Omega)
\]
Now we can take limit as $h\rightarrow 0$ and use that $\phi_h \rightarrow \Phi$ in $L^2$ (cf.~Proposition~\ref{prop:L2proj-RT}) together with \eqref{eq:hcompact} to obtain that, for all $\Phi \in H(\curl;\Omega)$
\[
- \int_0^T ( B,\Phi ) \eta'(t)\, dt
= -\int_0^T (E, \rm curl \Phi)\, dt 
=: \int_0^T \langle L(t), \Phi \rangle_{H(\curl;\Omega)} \eta(t)\, dt .
\]
It is straight forward to see that $L(t) : H(\curl;\Omega) \rightarrow H(\curl;\Omega)^*$ is a bounded linear operator and $\|L(t)\|_{H(\curl;\Omega)^*} \in L^2(0,T)$. Moreover, since $H(\curl;\Omega)$ is dense in $L^2$, therefore we can treat $(B,\Phi)$ as the duality pairing on $H(\curl;\Omega)$. Then from the definition of weak derivative, we have that 
\[
\mathrm{L}(t) = \partial_t B \in L^2(0,T;H(\curl;\Omega)^*) .
\]
Namely, the first equation in \eqref{eq:MaxWeak} holds.

Next, we show that the second equation also holds. From \eqref{eq:SD-amp}, with $\psi_h=R_h\Phi$, where $R_h$ is the Riesz projection (cf.~\ref{eq:def-Rh1}) and $\Phi\in H_0(\curl;\Omega)$, $\eta \in C_c^\infty(0,T)$
\[
\begin{aligned}
\int_0^T \langle \varepsilon\partial_t E_h, R_h \Phi \rangle \eta(t) dt
= \int_0^T \left( -(\sigma E_h,R_h \Phi )+(\mu^{-1}B_h,\curl R_h \Phi )+(f,R_h \Phi ) \right) \eta(t)\, dt.
\end{aligned}
\]
Applying, integration-by-parts on the left-hand-side, we obtain that 
\[
\begin{aligned}
-\int_0^T (\varepsilon E_h, R_h \Phi ) \eta'(t) dt
= \int_0^T \left( -(\sigma E_h,R_h \Phi )+(\mu^{-1}B_h,\curl R_h \Phi )+(f,R_h \Phi ) \right) \eta(t)\, dt.
\end{aligned}
\]
Taking the limits from \eqref{eq:hcompact} and Proposition~\ref{prop:Riesz-Hcurl}, we arrive at 
\[
\begin{aligned}
-\int_0^T (\varepsilon E, \Phi ) \eta'(t) dt
&= \int_0^T \left( -(\sigma E, \Phi )+(\mu^{-1}B,\curl \Phi )+(f, \Phi ) \right) \eta(t)\, dt \\
&=:\int_0^T \langle \mathcal{L}(t), \Phi \rangle_{H_0(\curl;\Omega)} .
\end{aligned}
\]
It is straight-forward to show that for a.e., $t\in (0,T)$, $\mathcal{L} : H_0(\curl;\Omega) \rightarrow H_0(\curl;\Omega)^*$ is bounded and linear and $\|\mathcal{L}\|_{H_0(\curl;\Omega)^*} \in L^2(0,T)$. Thus we have 
\[
\int_0^T \langle \mathcal{L}(t), \Phi \rangle_{H_0(\curl;\Omega)} \eta(t) \, dt 
= -\int_0^T ( E, \Phi  ) \eta'(t)\, dt  
\quad \forall \Phi\in H_0(\curl;\Omega) \, , \forall \eta \in C_c^\infty(0,T) . 
\]
Since $H_0(\curl;\Omega)$ is dense in $L^2(\Omega)$,   
using \cite[Section 7.2]{TRoubicek_2005a} 
we can identify $(E,\Phi)$ as $\langle E, \Phi \rangle_{H_0(\curl;\Omega)}$ a.e. in $t \in (0,T)$. Then from the definition of weak derivative, we deduce that 
\[
 \mathcal{L}(t) = \partial_t E \in L^2(0,T;H_0(\curl;\Omega)^*)
\]
and the second equation in \eqref{eq:MaxWeak} holds. Notice that uniqueness to \eqref{eq:MaxWeak} implies that the entire sequence converges. Finally, the convergence of $\divv B_h$ was shown in Lemma~\ref{lem:divBh-strong-zero-upd}. 
\end{proof}

\appendix

\section{Appendix}
\label{app:appendix}

\begin{proposition}[Global $L^2$-orthogonal projection onto $\RT_h$ and its convergence]
\label{prop:L2proj-RT} 
Let $\RT_h$ denote the Raviart--Thomas space of order $k$ on $\mathcal T_h$, conforming in $H(\divv;\Omega)$.

\smallskip
\noindent\textbf{(a) Construction (global $L^2$-orthogonal projector).}
For $u\in L^2(\Omega)^3$, define $P_h u\in \RT_h$ by
\begin{equation}\label{eq:L2proj-def}
  (u - P_h u,\, v_h)_{L^2(\Omega)} = 0 \qquad \forall\, v_h\in \RT_h.
\end{equation}
Equivalently, if $\{\rho_i\}_{i=1}^M$ is any basis of $\RT_h$, let
$M_{ij}:=(\rho_j,\rho_i)$ and $b_i:=(u,\rho_i)$ and solve $M c=b$; then
$P_h u:=\sum_{j=1}^M c_j\,\rho_j$.

\smallskip
\noindent Then $P_h:L^2(\Omega)^3\to \RT_h$ is a well-defined linear projector with
\[
P_h^2=P_h,\qquad \mathrm{Range}(P_h)=\RT_h,\qquad \|P_h u\|_{L^2}\le \|u\|_{L^2}\quad
\text{and}\quad
\|u-P_h u\|_{L^2}=\min_{v_h\in\RT_h}\|u-v_h\|_{L^2}.
\]

\smallskip
\noindent\textbf{(b) Convergence for $L^2$-data.}
For every $\Phi\in L^2(\Omega)^3$,
\begin{equation}\label{eq:L2-conv}
  \|\,\Phi - P_h\Phi\,\|_{L^2(\Omega)} \xrightarrow[h\to0]{} 0 .
\end{equation}
\end{proposition}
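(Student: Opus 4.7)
The plan is to prove (a) by reducing everything to standard Hilbert-space/linear-algebra facts about orthogonal projection onto a finite-dimensional subspace, and then to prove (b) by combining the best-approximation identity from (a) with density of a classical finite-element interpolant of smooth fields.

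For part (a), I would first observe that the $L^2$ inner product, restricted to $\RT_h\times\RT_h$, is symmetric, continuous, and coercive (trivially, $(v_h,v_h)_{L^2}=\|v_h\|_{L^2}^2$), so in any basis $\{\rho_i\}_{i=1}^M$ the Gram matrix $M_{ij}=(\rho_j,\rho_i)_{L^2}$ is symmetric positive definite and hence invertible. Solving $Mc=b$ with $b_i=(u,\rho_i)_{L^2}$ yields existence, uniqueness, and linearity of $P_h u$ in $u$. Idempotency follows because, for $u\in\RT_h$, uniqueness of the Galerkin solution of \eqref{eq:L2proj-def} forces $P_h u=u$, and hence $\mathrm{Range}(P_h)=\RT_h$. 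The contraction property and best-approximation identity are immediate from Galerkin orthogonality: testing \eqref{eq:L2proj-def} with $v_h=P_h u$ gives $\|P_h u\|_{L^2}^2=(u,P_h u)_{L^2}\le \|u\|_{L^2}\|P_h u\|_{L^2}$, and expanding $\|u-v_h\|_{L^2}^2=\|u-P_h u\|_{L^2}^2+\|P_h u-v_h\|_{L^2}^2$ (the cross term vanishes by orthogonality) gives the minimization property.

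For part (b), I would use the best-approximation identity from (a) to reduce \eqref{eq:L2-conv} to the density of $\bigcup_h \RT_h$ in $L^2(\Omega)^3$, i.e.\ the existence, for each $\Phi\in L^2(\Omega)^3$, of some $v_h\in\RT_h$ with $\|\Phi-v_h\|_{L^2}\to 0$ as $h\to 0$. Given $\varepsilon>0$, the plan is a two-step smoothing argument: first invoke density of $C_c^\infty(\Omega)^3$ in $L^2(\Omega)^3$ to pick $\Phi_\varepsilon\in C_c^\infty(\Omega)^3$ with $\|\Phi-\Phi_\varepsilon\|_{L^2}<\varepsilon/2$; then apply the canonical Raviart--Thomas interpolant $\mathcal I_h \Phi_\varepsilon\in\RT_h$, which is well-defined thanks to the smoothness of $\Phi_\varepsilon$ (face normal moments make sense). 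Classical finite-element approximation theory on shape-regular tetrahedral meshes delivers $\|\Phi_\varepsilon-\mathcal I_h \Phi_\varepsilon\|_{L^2}\le C h\,\|\Phi_\varepsilon\|_{H^1}\to 0$, so for $h$ small enough this error is below $\varepsilon/2$. The triangle inequality combined with the best-approximation identity then yields
\[
\|\Phi - P_h\Phi\|_{L^2}\;\le\;\|\Phi-\mathcal I_h\Phi_\varepsilon\|_{L^2}\;\le\;\|\Phi-\Phi_\varepsilon\|_{L^2}+\|\Phi_\varepsilon-\mathcal I_h\Phi_\varepsilon\|_{L^2}<\varepsilon,
\]
for all sufficiently small $h$, proving \eqref{eq:L2-conv}.

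The only genuinely non-trivial ingredient is the quantitative interpolation estimate on smooth fields; there is no deep obstacle, since this is a classical result (Raviart--Thomas / Brezzi--Fortin), but some care is warranted because the canonical RT interpolant requires more regularity than $L^2$. The intermediate smoothing step is precisely what circumvents this issue and makes the density argument work uniformly in $h$.
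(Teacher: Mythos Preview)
Your proposal is correct and follows essentially the same route as the paper: for (a) you use the SPD Gram matrix for well-posedness and Galerkin orthogonality/Pythagoras for best approximation and contractivity, and for (b) you combine the best-approximation identity with density of $C_c^\infty(\Omega)^3$ in $L^2$ and the classical Raviart--Thomas interpolation estimate on smooth fields. The only cosmetic difference is that the paper bounds $\|\Phi-P_h\Phi\|_{L^2}$ via a three-term split using the stability $\|P_h\|\le 1$ (yielding $2\varepsilon$ plus the interpolation error), whereas you go directly through best approximation with the candidate $\mathcal I_h\Phi_\varepsilon$; both are equivalent.
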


\begin{proof}
\emph{(a)} Pick any basis $\{\rho_i\}$ of $\RT_h$. The matrix $M=[(\rho_j,\rho_i)]$ is
symmetric positive definite (SPD) because $(\cdot,\cdot)$ is an inner product and the basis is
linearly independent, hence the linear system $M c=b$ has a unique solution $c$. This defines a
linear map $P_h$. From \eqref{eq:L2proj-def}, using Pythagoras’ theorem gives, for every $v_h\in\RT_h$,
\[
\begin{aligned}
\|u-v_h\|_{L^2}^2
&=\|u-P_h u\|_{L^2}^2+\|P_h u-v_h\|_{L^2}^2.  
\end{aligned}
\]
Taking the infimum over $v_h$ yields the best-approximation identity. Choosing $v_h = 0$ gives the contractivity $\|P_h u\|_{L^2}\le \|u\|_{L^2}$. 

To prove the range, we first notice that by definition $P_h u \in \RT_h$ for every $u \in L^2$. Hence $Range(P_h) \subset \RT_h$. Next let $v_h \in \RT_h$. Applying orthogonality condition with $u = v_h$, we obtain that 
\[
(v_h - P_h v_h, w_h) = 0 \quad \forall w_h \in \RT_h .
\]
Since $v_h - P_h v_h \in \RT_h$, choose $w_h = v_h-P_h v_h$ to get 
\[
\|v_h - P_h v_h\|^2_{L^2} = 0 \quad \implies \quad P_h v_h = v_h .
\]
Thus every $v_h \in \RT_h$ is in the range of $P_h$, so $\RT_h \subset Range(P_h)$. Finally, the idempotence follows immediately.

\smallskip
\emph{(b)} We prove density of $\bigcup_h \RT_h$ in $L^2(\Omega)^3$, which combined with
best approximation gives \eqref{eq:L2-conv}. Let $\varepsilon>0$ and choose
$\phi\in C_c^\infty(\Omega)^3$ with $\|\Phi-\phi\|_{L^2}<\varepsilon$ (density of $C_c^\infty$ in
$L^2$). For $\phi\in H^1(\Omega)^3$ there exist standard Raviart--Thomas interpolants (or smoothed
quasi-interpolants, \cite{HAntil_SBartels_AKaltenbach_RKhandelwal_2024a} and \cite[Thms. 16.4, 16.6]{AErn_JLGuermond_2020a}) $I_h^{\mathrm{RT}}\phi\in \RT_h$ such that
\begin{equation}\label{eq:RT-approx}
  \|\phi - I_h^{\mathrm{RT}}\phi\|_{L^2(\Omega)} \;\le\; C\, h\,\|\phi\|_{H^{1}(\Omega)}.
\end{equation}
Using the optimality of $P_h$ and stability $\|P_h\|\le1$,
\[
\|\Phi - P_h\Phi\|_{L^2}
\le \|\Phi - \phi\|_{L^2} + \|\phi - P_h\phi\|_{L^2} + \|P_h(\phi-\Phi)\|_{L^2}
\le 2\varepsilon + \|\phi - I_h^{\mathrm{RT}}\phi\|_{L^2}.
\]
By \eqref{eq:RT-approx}, the last term tends to $0$ as $h\to0$. Since $\varepsilon>0$ is arbitrary,
\(\|\Phi-P_h\Phi\|_{L^2}\to0\). This proves \eqref{eq:L2-conv}. 
\end{proof}

\begin{proposition}[Riesz projection on $H_0(\curl)$: stability and convergence]
\label{prop:Riesz-Hcurl} 
Define, for each $u\in H_0(\curl;\Omega)$, the element $R_h u\in \mathcal N_h^0$ by
\begin{equation}\label{eq:def-Rh}
  a(R_h u, v_h) \;=\; a(u, v_h)\qquad\forall\,v_h\in \mathcal N_h^0,
\end{equation}
where $a(u,v):=(u,v)+(\curl u,\curl v)$.
Then the following hold.

\smallskip\noindent{\bf (i) Well-posedness and linearity.}
For each $u$, there exists a unique $R_h u\in\mathcal N_h^0$ solving \eqref{eq:def-Rh}; the
map $R_h:H_0(\curl;\Omega)\to \mathcal N_h^0$ is linear.

\smallskip\noindent{\bf (ii) Projection and Galerkin orthogonality.}
$R_h$ is a projection: $R_h|_{\mathcal N_h^0}=\mathrm{Id}$.
Moreover,
\begin{equation}\label{eq:gal-orth}
  a(u - R_h u,\, v_h)=0\qquad \forall\,v_h\in \mathcal N_h^0.
\end{equation}

\smallskip\noindent{\bf (iii) Stability (contractivity) in $H(\curl)$.}
For all $u\in H_0(\curl;\Omega)$,
\begin{equation}\label{eq:contract}
  \|R_h u\|_{H(\curl)} \;\le\; \|u\|_{H(\curl)},
\qquad
  \|u-R_h u\|_{H(\curl)} \;=\; \min_{v_h\in\mathcal N_h^0}\|u-v_h\|_{H(\curl)}.
\end{equation}

\smallskip\noindent{\bf (iv) Convergence (general).} 
For every $u\in H_0(\curl)$,
\begin{equation}\label{eq:conv-general}
  \|u-R_h u\|_{H(\curl)} \xrightarrow[h\to0]{} 0,
\quad\text{hence}\quad
  \|u-R_h u\|_{L^2} \to 0,\ \ \|\curl(u-R_h u)\|_{L^2}\to 0 .
\end{equation}
\end{proposition}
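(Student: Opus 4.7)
The plan is to recognize that the bilinear form $a(u,v):=(u,v)+(\curl u,\curl v)$ is precisely the canonical inner product on $H(\curl;\Omega)$, whose induced norm is $\|\cdot\|_{H(\curl)}$. With this observation, \eqref{eq:def-Rh} identifies $R_h u$ as the orthogonal projection (in the $a$-inner product) of $u$ onto the closed, finite-dimensional subspace $\mathcal N_h^0\subset H_0(\curl;\Omega)$, and parts (i)--(iii) reduce to standard Hilbert-space facts.

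For (i), since $\mathcal N_h^0$ is finite-dimensional (hence closed) in the Hilbert space $(H_0(\curl;\Omega),a(\cdot,\cdot))$, the Lax--Milgram theorem (equivalently, Riesz representation) applied to $a$ restricted to $\mathcal N_h^0$ produces a unique $R_h u\in \mathcal N_h^0$ solving \eqref{eq:def-Rh}, and linearity of $R_h$ is immediate from the linearity of \eqref{eq:def-Rh} in $u$. For (ii), if $u\in\mathcal N_h^0$ then $u$ itself satisfies \eqref{eq:def-Rh}, and uniqueness forces $R_h u=u$; the Galerkin orthogonality \eqref{eq:gal-orth} is a tautology from the definition.

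For (iii), Galerkin orthogonality states that $u-R_h u$ is $a$-orthogonal to $\mathcal N_h^0$. Since $R_h u\in\mathcal N_h^0$, Pythagoras' theorem in the $a$-inner product yields $\|u\|_a^2=\|u-R_h u\|_a^2+\|R_h u\|_a^2$, hence the contractivity $\|R_h u\|_{H(\curl)}\le\|u\|_{H(\curl)}$. For any $v_h\in\mathcal N_h^0$, the difference $R_h u-v_h$ also lies in $\mathcal N_h^0$, so a second application of Pythagoras gives $\|u-v_h\|_a^2=\|u-R_h u\|_a^2+\|R_h u-v_h\|_a^2$; taking the infimum over $v_h\in\mathcal N_h^0$ establishes the best-approximation identity in \eqref{eq:contract}.

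The genuinely non-trivial part is (iv). By the best-approximation identity, it suffices to prove that $\bigcup_h\mathcal N_h^0$ is dense in $H_0(\curl;\Omega)$. I would fix $\varepsilon>0$, invoke the classical density of $C_c^\infty(\Omega)^3$ in $H_0(\curl;\Omega)$ to pick $\phi\in C_c^\infty(\Omega)^3$ with $\|u-\phi\|_{H(\curl)}<\varepsilon$ (note that such $\phi$ automatically has vanishing tangential trace and lies in every $H^s$), and then apply the Nédélec interpolant (or a smoothed quasi-interpolant as cited in the appendix, e.g.\ \cite{HAntil_SBartels_AKaltenbach_RKhandelwal_2024a}, \cite[Thms.~16.4, 16.6]{AErn_JLGuermond_2020a}) to obtain $I_h^{\mathcal N}\phi\in\mathcal N_h^0$ with $\|\phi-I_h^{\mathcal N}\phi\|_{H(\curl)}\le Ch\,(\|\phi\|_{H^1}+\|\curl\phi\|_{H^1})$. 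Taking $v_h = I_h^{\mathcal N}\phi$ in the best-approximation identity and applying the triangle inequality yields $\|u-R_h u\|_{H(\curl)}\le \varepsilon + Ch\,(\|\phi\|_{H^1}+\|\curl\phi\|_{H^1})$; sending $h\to0$ and then $\varepsilon\to0$ concludes. The main obstacle is that the classical Nédélec interpolant is not defined for mere $H(\curl)$ data (edge moments require more regularity); taking $\phi\in C_c^\infty$ bypasses this issue, but an equivalent route would be to invoke a Schöberl-type commuting quasi-interpolant directly on $H_0(\curl;\Omega)$.
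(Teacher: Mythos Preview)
Your proof is correct and follows essentially the same approach as the paper: parts (i)--(iii) are obtained by the identical Hilbert-space orthogonal-projection argument (Riesz representation on the finite-dimensional subspace, Galerkin orthogonality, and Pythagoras for contractivity and best approximation). For (iv), the paper simply asserts density of $\bigcup_h\mathcal N_h^0$ in $H_0(\curl;\Omega)$ and invokes best approximation, whereas you supply an explicit two-step density argument ($C_c^\infty$ approximation followed by N\'ed\'elec interpolation); this is a more detailed version of the same idea rather than a different route.
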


\begin{proof}
\emph{(i) Well-posedness.}
Endow $H_0(\curl;\Omega)$ with the inner product $a(\cdot,\cdot)$ and induced norm
$\|v\|_{H(\curl;\Omega)}^2=a(v,v)$. On $\mathcal N_h^0$, $a(\cdot,\cdot)$ is symmetric and coercive:
$a(v_h,v_h)=\|v_h\|_{H(\curl;\Omega)}^2>0$ for $v_h\neq0$. Thus, by the Riesz representation theorem in
the finite-dimensional subspace $\mathcal N_h^0$, for each continuous linear functional
$v_h\mapsto a(u,v_h)$ there exists a unique $R_h u\in\mathcal N_h^0$ solving \eqref{eq:def-Rh}.
Linearity follows from linearity of \eqref{eq:def-Rh} in $u$.

\emph{(ii) Projection and orthogonality.}
If $u\in\mathcal N_h^0$, then taking $v_h=R_h u-u\in\mathcal N_h^0$ in \eqref{eq:def-Rh} gives
$a(R_h u - u,\, R_h u - u)=0$, hence $R_h u=u$. Subtracting \eqref{eq:def-Rh} from itself yields
$a(u-R_h u,v_h)=0$ for all $v_h\in\mathcal N_h^0$.

\emph{(iii) Stability and best approximation.}
Since $R_h$ is the orthogonal projector onto $\mathcal N_h^0$ in the Hilbert space
$\big(H_0(\curl;\Omega),a(\cdot,\cdot)\big)$, Pythagoras’ theorem gives, for every $v_h\in\mathcal N_h^0$,
\[
\|u-v_h\|_{H(\curl;\Omega)}^2
=\|u-R_h u\|_{H(\curl;\Omega)}^2+\|R_h u-v_h\|_{H(\curl;\Omega)}^2.
\]
Taking the infimum over $v_h$ yields the best-approximation identity in \eqref{eq:contract}.
Choosing $v_h=0$ shows
$\|u\|_{H(\curl;\Omega)}^2=\|u-R_h u\|_{H(\curl;\Omega)}^2+\|R_h u\|_{H(\curl;\Omega)}^2\ge \|R_h u\|_{H(\curl;\Omega)}^2$,
hence contractivity.

\emph{(iv) Convergence.}
Let $u\in H_0(\curl)$ and $\varepsilon>0$. By density of $\bigcup_h\mathcal N_h^0$ in $H_0(\curl;\Omega)$
there exists $v_h^\varepsilon\in\mathcal N_h^0$ with
$\|u-v_h^\varepsilon\|_{H(\curl;\Omega)}<\varepsilon$. By best approximation,
\[
\|u-R_h u\|_{H(\curl;\Omega)} \;=\; \min_{w_h\in\mathcal N_h^0}\|u-w_h\|_{H(\curl;\Omega)}
\;\le\; \|u-v_h^\varepsilon\|_{H(\curl;\Omega)} \;<\; \varepsilon.
\]
As $\varepsilon$ was arbitrary, $\|u-R_h u\|_{H(\curl;\Omega)}\to0$. The two component convergences in
\eqref{eq:conv-general} follow since the $H(\curl;\Omega)$-norm is
$\|u-R_h u\|_{H(\curl)}^2=\|u-R_h u\|_{L^2}^2+\|\curl(u-R_h u)\|_{L^2}^2$.
\end{proof}

\begin{proposition}[Constrained $L^2$-projection onto discrete divergence-free RT fields]
\label{prop:constrained-proj}
Let $\mathcal{RT}_h\subset H(\divv;\Omega)$ be a Raviart--Thomas space (order $k\ge0$) on a shape-regular mesh $\mathcal T_h$, and let
$\mathcal Q_h:=\{q_h\in L^2(\Omega): q_h|_K\in\mathbb P_k(K)\ \forall K\in\mathcal T_h\}$.
Define the discrete divergence-free subspace
\[
Z_h := \ker(\divv|_{\mathcal{RT}_h})=\{v_h\in\mathcal{RT}_h:\ \divv v_h=0\ \text{in }\mathcal Q_h\}.
\]
For any $B_0\in L^2(\Omega)^3$ there exists a unique $B_h^0\in Z_h$ such that
\begin{equation}\label{eq:orth-onto-Zh}
(B_h^0, z_h) = (B_0, z_h) \qquad \forall\, z_h\in Z_h.
\end{equation}
Equivalently, $(B_h^0,p_h)\in \mathcal{RT}_h\times\mathcal Q_h$ solves the mixed system
\begin{equation}\label{eq:mixed-proj}
\begin{aligned}
(B_h^0, v_h) + (p_h,\divv v_h) &= (B_0, v_h)&&\forall\, v_h\in \mathcal{RT}_h,\\
(\divv B_h^0, q_h) &= 0 &&\forall\, q_h\in \mathcal Q_h.
\end{aligned}
\end{equation}
Moreover, $B_h^0$ is the unique minimizer of the constrained problem
\[
\min\Big\{ \tfrac12\|v_h-B_0\|_{L^2(\Omega)^3}^2:\ v_h\in\mathcal{RT}_h,\ \divv v_h=0\ \text{in }\mathcal Q_h\Big\}.
\]
\end{proposition}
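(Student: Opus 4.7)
The plan is to handle the three claims in turn: existence/uniqueness of the constrained projection $B_h^0$, the minimization characterization, and equivalence with the mixed saddle-point system \eqref{eq:mixed-proj}. The workhorse throughout is the discrete de Rham exactness already invoked in Section~\ref{sec:semidiscrete}, in particular the surjectivity of $\divv:\mathcal{RT}_h\to\mathcal Q_h$.

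First, I would observe that $Z_h=\ker(\divv|_{\mathcal{RT}_h})$ is a finite-dimensional, hence closed, linear subspace of $L^2(\Omega)^3$. The Hilbert projection theorem applied to $L^2(\Omega)^3$ and the closed subspace $Z_h$ then yields a unique $B_h^0\in Z_h$ satisfying \eqref{eq:orth-onto-Zh}, namely the $L^2$-orthogonal projection of $B_0$ onto $Z_h$. The same projection theorem gives the minimization characterization: $B_h^0$ is the unique minimizer of $\tfrac12\|v_h-B_0\|_{L^2}^2$ over $v_h\in Z_h$, since the first-order optimality condition for this strictly convex quadratic on a linear subspace is exactly \eqref{eq:orth-onto-Zh}.

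Second, I would establish the equivalence with \eqref{eq:mixed-proj}. The forward direction is immediate: if $(B_h^0,p_h)$ solves the mixed system, then testing the first equation against $v_h\in Z_h$ (so that $\divv v_h=0$ in $\mathcal Q_h$ and the Lagrange-multiplier term vanishes) gives \eqref{eq:orth-onto-Zh}, while the second equation says $B_h^0\in Z_h$. For the reverse direction, given the $B_h^0$ from the first step, I need to produce a Lagrange multiplier $p_h\in\mathcal Q_h$ satisfying
\[
(p_h,\divv v_h)\;=\;(B_0-B_h^0,\,v_h)\qquad\forall\,v_h\in\mathcal{RT}_h.
\]
By \eqref{eq:orth-onto-Zh}, the right-hand side vanishes for $v_h\in Z_h$, so the linear functional $v_h\mapsto (B_0-B_h^0,v_h)$ descends to the quotient $\mathcal{RT}_h/Z_h$. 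Using the discrete exact sequence, the map $\divv:\mathcal{RT}_h\to\mathcal Q_h$ is surjective with kernel $Z_h$, so it induces an isomorphism $\mathcal{RT}_h/Z_h\to\mathcal Q_h$. Thus there is a unique $p_h\in\mathcal Q_h$ representing this functional, i.e.\ solving the displayed equation; this yields a solution $(B_h^0,p_h)$ to \eqref{eq:mixed-proj}. Uniqueness of $p_h$ follows from the same isomorphism (equivalently, the discrete inf-sup condition for the pair $(\mathcal{RT}_h,\mathcal Q_h)$).

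The only nontrivial ingredient is the surjectivity of $\divv:\mathcal{RT}_h\to\mathcal Q_h$ used to construct and uniquely determine $p_h$; this is already recorded in the discrete de Rham sequence in Section~\ref{sec:semidiscrete}. Everything else is Hilbert-space projection and first-order optimality.
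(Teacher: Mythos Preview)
Your argument is correct. The first two claims (existence/uniqueness of $B_h^0\in Z_h$ and the minimization characterization) are handled exactly as in the paper, via the Hilbert projection theorem on the finite-dimensional closed subspace $Z_h$.

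For the equivalence with the mixed system, you and the paper diverge slightly. The paper invokes the abstract Babu\v{s}ka--Brezzi framework: coercivity of $(\cdot,\cdot)$ on $Z_h$ plus the discrete inf--sup condition for the pair $(\mathcal{RT}_h,\mathcal Q_h)$, appealing to a Fortin operator. You instead argue more directly: the functional $v_h\mapsto(B_0-B_h^0,v_h)$ vanishes on $Z_h$ by \eqref{eq:orth-onto-Zh}, and since $\divv:\mathcal{RT}_h\to\mathcal Q_h$ is onto with kernel $Z_h$ (discrete de Rham exactness), the induced isomorphism $\mathcal{RT}_h/Z_h\cong\mathcal Q_h$ together with Riesz in the finite-dimensional space $\mathcal Q_h$ produces the unique multiplier $p_h$. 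Your route is more elementary and fully self-contained within the exactness already stated in Section~\ref{sec:semidiscrete}; the paper's route situates the result in the standard mixed-FEM theory and records a uniform-in-$h$ inf--sup constant, which is not needed for well-posedness at fixed $h$ but would matter for $h$-uniform stability of $p_h$.
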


\begin{proof}
\emph{Step 1: Equivalence ``orthogonality $\Leftrightarrow$ minimizer''.}
Let $Z_h$ be a finite-dimensional subspace of the Hilbert space $L^2(\Omega)^3$.
The (unconstrained) best approximation problem $\min_{z_h\in Z_h}\|z_h-B_0\|^2$ has a unique solution characterized by the $L^2$-orthogonality
\((B_0-B_h^0,z_h)=0\) for all $z_h\in Z_h\), i.e.\ \eqref{eq:orth-onto-Zh}.
This is the standard projection theorem in Hilbert spaces.

\emph{Step 2: Equivalence with the mixed (saddle-point) system.}
Consider the constrained minimization
\(\min_{v_h\in\mathcal{RT}_h}\tfrac12\|v_h-B_0\|^2\) subject to \(\divv v_h=0\in\mathcal Q_h\).
Introduce a Lagrange multiplier \(p_h\in\mathcal Q_h\) and the Lagrangian
\[
\mathcal L(v_h,p_h) := \tfrac12 (v_h,v_h) - (B_0,v_h) + (p_h,\divv v_h).
\]
Stationarity w.r.t.\ $v_h$ and $p_h$ yields \eqref{eq:mixed-proj}. Conversely, if
\((B_h^0,p_h)\) solves \eqref{eq:mixed-proj}, then for any $z_h\in Z_h$ the first line with $v_h=z_h$ gives
\((B_h^0-B_0,z_h)=0\), i.e.\ \eqref{eq:orth-onto-Zh}, so $B_h^0\in Z_h$ is the (unique) orthogonal projection.

\emph{Step 3: Existence and uniqueness of \eqref{eq:mixed-proj}.}
Let $a(v_h,w_h):=(v_h,w_h)$ on $\mathcal{RT}_h$ and $b(v_h,q_h):=(\divv v_h,q_h)$.
The pair $(\mathcal{RT}_h,\mathcal Q_h)$ satisfies the uniform Babu\v{s}ka--Brezzi conditions:
(i) $a(\cdot,\cdot)$ is coercive on $\ker b = Z_h$ because $a(z_h,z_h)=\|z_h\|_{L^2}^2$;
(ii) the discrete inf--sup holds:
\[
\inf_{0\neq q_h\in \mathcal Q_h}\ \sup_{0\neq v_h\in\mathcal{RT}_h}
\frac{(\divv v_h,q_h)}{\|v_h\|_{L^2}\,\|q_h\|_{L^2}} \ \ge\ \beta>0,
\]
with $\beta$ independent of $h$ (standard for RT spaces; e.g.\ via a Fortin operator that commutes with $\divv$).
Thus \eqref{eq:mixed-proj} is well-posed, giving a unique pair $(B_h^0,p_h)$ and hence a unique $B_h^0\in Z_h$.
\end{proof}

\section*{Acknowledgment}
First and foremost, the author thanks Alex Kaltenbach for carefully reading the manuscript and offering helpful suggestions, especially regarding uniqueness of the solution to the continuous problem. The author is grateful to Irwin Yousept for providing key references on Maxwell’s equations \cite{GDuvaut_JLLions_1976a,RLeis_1997a,AKirsch_ARieder_2016a,IYousept_2020a,MFabrizio_MMorro_2003a} and for many discussions; in particular, he pointed out that it suffices to assume $\mu\in L^\infty$. The author also thanks Keegan Kirk for proofreading the manuscript. Finally, the author is indebted to Denis Ridzal and to Sandia National Laboratories’ LDRD project \emph{Active Circuits for EMI Resilience in Contested Environments} for valuable input on the technical direction.

\bibliographystyle{plain}
\bibliography{refs}

\end{document}